\documentclass[11pt, notitlepage]{article}
\usepackage{amssymb,amsmath,comment}
\catcode`\@=11 \@addtoreset{equation}{section}
\def\thesection{\arabic{section}}

\def\theequation{\thesection.\arabic{equation}}
\catcode`\@=12
\usepackage{colortbl}
\usepackage[mathscr]{eucal}
\usepackage{epsf}
\usepackage{esint}
\usepackage{a4wide}
\def\R{\mathbb{R}}

\DeclareMathOperator*{\esssup}{ess\,sup}
\DeclareMathOperator*{\essinf}{ess\,inf}
\DeclareMathOperator*{\essliminf}{ess\,lim\,inf}
\DeclareMathOperator*{\esslimsup}{ess\,lim\,sup}

\newcommand{\noi} {\noindent}

\newcommand{\loc} {\mathrm{loc}}

\setcounter{page}{1}\pagestyle{myheadings}\markboth{\small } {\small Doubly nonlinear nonlocal parabolic $p$-Laplace equation}
\usepackage[all]{xy}
\catcode`\@=11
\def\theequation{\@arabic{\c@section}.\@arabic{\c@equation}}
\catcode`\@=12

\newtheorem{Theorem}{Theorem}[section]
\newtheorem{Lemma}[Theorem]{Lemma}

\newtheorem{Corollary}[Theorem]{Corollary}
\newtheorem{Remark}[Theorem]{Remark}
\newtheorem{Definition}[Theorem]{Definition}

\begin{document}

{\vspace{0.01in}}

\title
{Continuity of weak solutions to a class of doubly nonlinear parabolic nonlocal $p$-Laplace equation}

\title
{\sc Lower semicontinuity and pointwise behavior of  supersolutions for some doubly nonlinear nonlocal parabolic $p$-Laplace equations}
\author{Agnid Banerjee, Prashanta Garain and Juha Kinnunen}

\maketitle

\begin{abstract}
{We discuss pointwise behavior of weak supersolutions for a class of  doubly nonlinear parabolic fractional $p$-Laplace equations which includes the fractional parabolic $p$-Laplace equation and the fractional porous medium equation. More precisely, we show that weak supersolutions have lower semicontinuous representative. We also prove that the semicontinuous representative at an instant of time is determined by the values at previous times. 
This gives a pointwise interpretation for a weak supersolution at every point. 
The corresponding results hold true also for weak subsolutions.
Our  results extend  some recent results in the local parabolic case  and in the nonlocal elliptic case   to the nonlocal parabolic case. 
We prove the required energy estimates and measure theoretic De Giorgi type lemmas in the fractional setting.}
\medskip

\noi {Key words: Doubly nonlinear parabolic equation, fractional $p$-Laplace equation, porous medium equation, energy estimates, De Giorgi's method}.

\medskip

\noi{\textit{2010 Mathematics Subject Classification:} 35K59, 35K65, 35B45, 35B65, 35R11.}
\end{abstract}

\section{Introduction}
This article discusses pointwise behavior of weak supersolutions $u:\mathbb{R}^n\times(0,T)\to\mathbb{R}$ to the doubly nonlinear parabolic nonlocal $p$-Laplace equation
\begin{equation}\label{maineqn}
\partial_t(|u|^{q-1}u)+\mathcal{L}u=0\text{ in }\Omega\times(0,T),
\quad 1<p<\infty,\quad q>0,
\end{equation}
where $\Omega\subset\mathbb{R}^n$ is a bounded domain, $T>0$ and the operator $\mathcal{L}$ is defined by
$$
\mathcal{L}u(x,t)=\text{P.V.}\int_{\mathbb{R}^n}|u(x,t)-u(y,t)|^{p-2}(u(x,t)-u(y,t))K(x,y,t)\,dy,
$$
where P.V. stands for the principal value. 
We assume that the kernel $K$ is a measurable symmetric kernel with respect to $x$ and $y$ and satisfies
\begin{equation}\label{kernel}
\frac{\Lambda^{-1}}{|x-y|^{n+ps}}\leq K(x,y,t)\leq\frac{\Lambda}{|x-y|^{n+ps}},
\end{equation}
for almost every $x,y\in\R^n$ uniformly in $t\in(0,T)$ for some $\Lambda\geq 1$ and $s\in(0,1)$.  
If $K(x,y,t)=|x-y|^{-(n+ps)}$, then $\mathcal{L}$ reduces, up to a multiplicative constant, to the fractional $p$-Laplace operator $(-\Delta)_p^{s}$ and \eqref{maineqn} becomes
\begin{equation}\label{prot1}
\partial_t(|u|^{q-1}u)+(-\Delta)_p^{s}\,u=0.
\end{equation}
For $p=2$ and $q>0$, we have an equation of porous medium type. 

Our main result, Theorem \ref{thm1}, shows that a weak supersolution to \eqref{maineqn} is lower semicontinuous after a possible redefinition on a set of measure zero. 
In other words, a weak supersolution to \eqref{maineqn} has a lower semicontinuous representative. 
Furthermore, we prove that the lower semicontinuous representative can be recovered from the previous times, see Theorem \ref{thm2}.
This gives a pointwise interpretation for a weak supersolution at every point.
Our results applies to weak supersolutions and weak subsolutions which may change sign. 
In the nonlocal elliptic case, i.e.
\begin{equation}\label{ellipticcase}
\mathcal{L} u=0 \text{ in }\Omega,
\end{equation}
lower semicontinuity of weak supersolutions has been established by Korvenp\"{a}\"{a},  Kuusi and Palatucci \cite{Kuusilsc}.
As far as we are aware, our work contains the first results in the nonlocal parabolic case.
In the local case $s=1$, lower semicontinuity of weak supersolutions to nonlinear parabolic problems has been studied 
by Kinnunen and Lindqvist \cite{KinLin2, KLin}, Kuusi \cite{Kuusilsclocal}, Liao \cite{Liao} and Ziemer \cite{Ziemer}.

We apply a general result of Liao in \cite[Theorem 2.1]{Liao}, which asserts that every function that satisfies a measure theoretic property in Definition \ref{propertyP} has a lower semicontinuous representative.
In \cite{Liao} this result was applied for a large class of elliptic and parabolic partial differential equations in the local case $s=1$, but we show that this technique
can also be applied  in the nonlocal case $s\in(0,1)$. 
Our argument is based on a new energy estimate, Lemma \ref{Subenergyestimate}, along with De Giorgi type lemmas, see Lemma \ref{Lemma1} and Lemma \ref{Lemma2}.
In particular, the approach is independent of Harnack type estimates.
Lower semicontinuity results for weak supersolutions are needed, for example, in extending Perron's method in Korvenp\"{a}\"{a},  Kuusi and Palatucci \cite{Kuusilsc} to the parabolic nonlocal case,
but this will be discussed elsewhere.

As far as we know, local H\"older continuity and Harnack's inequality for weak solutions to \eqref{maineqn} are open questions.
These questions have been studied extensively in the elliptic nonlocal case. 
For $p=2$, Kassmann in \cite{KassmanHarnack} proved a Harnack inequality for equations of the type \eqref{ellipticcase} with lower order perturbations.
This was extended by Di~Castro, Kuusi and  Palatucci \cite{Kuusiharnack} to the case $1<p<\infty$. 
See also Brasco,  Lindgren and  Schikorra \cite{BLS} and Di~Castro, Kuusi and Palatucci \cite{Kuusilocal}
for H\"older continuity results of \eqref{ellipticcase}. 

When $q=1$, the doubly nonlinear equation in \eqref{prot1} reduces to the fractional parabolic $p$-Laplace equation
\begin{equation}\label{fplap}
\partial_t u+(-\Delta)_p^{s}u=0.
\end{equation}
For such an equation, Str\"omqvist in \cite{Martinlocal} established local boundedness estimate for subsolutions and 
Brasco, Lindgren and Str{\"o}mqvist \cite{Martincont} 
proved H\"older continuity of weak solutions to \eqref{fplap} for $p\geq 2$. 
In the case when $p=2$, i.e. for equations modeled on $\partial_t u+(-\Delta)^s u=0$, 
Harnack's inequality has been established by Felsinger and Kassmann in  \cite{Kassweakharnack}, see also Str\"{o}mqvist \cite{Martinharnack}.  
Finally we mention that qualitative properties for fractional porous medium equations of the type,
\begin{equation*}\label{fracporous}
\partial_t u+(-\Delta)^s(u^m)=0,\quad m>0,
\end{equation*}
has been studied by Bonforte, Sire and V\'{a}zquez \cite{Sire}, Bonforte and V\'azquez \cite{BonVaz}, de~Pablo, Quir\'{o}s,  Rodr\'{\i}guez and V\'{a}zquez \cite{Vazquezpor1}.
When $q=p-1$, equation \eqref{prot1} becomes the doubly nonlinear equation
\begin{equation*}\label{dblyfplap}
\partial_t(|u|^{p-2}u)+(-\Delta)_p^{s}u=0.
\end{equation*}
This equation is homogeneous in the sense that the class of solutions is closed under multiplication by constants.
For such an equation, local boundedness estimate  for positive subsolutions and  a reverse H\"older inequality for positive supersolutions has been proven 
by Banerjee, Garain and Kinnunen \cite{BGK}. 

This article is organized as follows. In Section 2, we introduce  basic notation, gather  some preliminary results  that are relevant to our work  and then state the main results. 
In Section 3, we prove  an energy estimate  which is  needed to derive the measure theoretic De Giorgi type lemmas in Section 4  using which we prove our main results. 

\section{Preliminaries and main results}
We begin with some known results for fractional Sobolev spaces, see \cite{Hitchhiker'sguide} for more details.
\begin{Definition}
Let $1<p<\infty$ and $0<s<1$ and assume that $\Omega\subset\mathbb{R}^n$ is an open and connected subset of $\mathbb R^n$. 
The fractional Sobolev space $W^{s,p}(\Omega)$ is defined by
$$
W^{s,p}(\Omega)=\Big\{u\in L^p(\Omega):\frac{|u(x)-u(y)|}{|x-y|^{\frac{n}{p}+s}}\in L^p(\Omega\times \Omega)\Big\}
$$
and it is endowed with the norm
$$
\|u\|_{W^{s,p}(\Omega)}=\Big(\int_{\Omega}|u(x)|^p\,dx+\int_{\Omega}\int_{\Omega}\frac{|u(x)-u(y)|^p}{|x-y|^{n+ps}}\,dx\,dy\Big)^\frac{1}{p}.
$$
{The fractional Sobolev space with zero boundary values is defined by}
$$
W_{0}^{s,p}(\Omega)={\big\{u\in W^{s,p}(\mathbb{R}^n):u=0\text{ in }\mathbb{R}^n\setminus\Omega\big\}}.
$$
\end{Definition}

Both $W^{s,p}(\Omega)$ and $W_{0}^{s,p}(\Omega)$ are reflexive Banach spaces, see \cite{Hitchhiker'sguide}. 
For any $v,k\in\mathbb{R}$, we denote the positive and negative parts of $(v-k)$ by 
\[
(v-k)_{+}=\max\{(v-k),0\}
\quad\text{and}\quad 
(v-k)_{-}=\max\{-(v-k),0\}.
\]
We note that  $(v-k)_{-}=(k-v)_{+}$. For any $a,b\in\mathbb{R}$, we have $|a_{+}-b_{+}|\leq|a-b|$ which implies $u_{+}\in W^{s,p}(\Omega)$ when $u\in W^{s,p}(\Omega)$. 
Analogously, we have $u_{-}\in W^{s,p}(\Omega)$.

The parabolic Sobolev space $L^p(0,T;W^{s,p}(\Omega))$, $T>0$,
consists of measurable functions $u$ on $\Omega\times(0,T)$ such that 
$$
||u||_{L^p(0,T;W^{s,p}(\Omega))}=\Big(\int_{0}^{T} ||u(\cdot,t)||^p_{W^{s,p}(\Omega)}\,dt\Big)^\frac{1}{p}<\infty.
$$
The local space $L^p_{\loc}(0,T;W^{s,p}_{\loc}(\Omega))$ is defined by requiring the conditions above for every $\Omega' \times [t_1,t_2]\Subset \Omega \times (0,T)$. 

Next we state a  Sobolev embedding theorem, see \cite{Hitchhiker'sguide}.

\begin{Theorem}\label{ellipticembedding1}
Let $1<p<\infty$ and $0<s<1$ with $ps<n$ and $\kappa^{*}=\frac{n}{n-ps}$. 
There exists a constant $C=C(n,p,s)$ such that 
$$
\|u\|^p_{L^{\kappa^{*} p}(\mathbb{R}^n)}
\leq C\int_{\mathbb{R}^n}\int_{\mathbb{R}^n}\frac{|u(x)-u(y)|^p}{|x-y|^{n+ps}}\,dx\,dy
$$
for every $u\in W^{s,p}(\mathbb{R}^n)$.
If $\Omega$ is  a bounded  $W^{s,p}$-extension domain, there exists a constant $C=C(n, s, p, \Omega)$ such that, for any $\kappa \in [1, \kappa^*]$, we have
$$
\|u\|_{L^{\kappa p}(\Omega)}\leq C||u||_{W^{s,p}(\Omega)}
$$
for every $u\in W^{s,p}(\Omega)$.\\
If $ps=n$, then the above inequalities hold for any $\kappa\in[1,\infty)$. For $ps>n$, the second inequality holds for any $\kappa\in[1,\infty]$.
\end{Theorem}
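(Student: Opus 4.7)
The plan is to treat the three regimes $ps<n$, $ps=n$, and $ps>n$ separately. The critical Sobolev inequality on the whole space is the main content; once it is established, the second inequality on a bounded extension domain $\Omega$ follows by composing with a bounded extension operator $E\colon W^{s,p}(\Omega)\to W^{s,p}(\mathbb{R}^n)$ (the very meaning of ``$W^{s,p}$-extension domain'') and then using Hölder's interpolation $\|u\|_{L^{\kappa p}(\Omega)}\leq\|u\|_{L^{p}(\Omega)}^{1-\theta}\|u\|_{L^{\kappa^{*}p}(\Omega)}^{\theta}$ to cover every intermediate $\kappa\in[1,\kappa^{*}]$.

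For the critical inequality on $\mathbb{R}^n$ I would argue by dyadic truncation in the spirit of the Maz'ya–De~Giorgi proof. Splitting $u=u_{+}-u_{-}$ reduces matters to $u\geq 0$; set $A_{k}=\{u>2^{k}\}$ for $k\in\mathbb{Z}$. The engine of the argument is a level-set inequality of the form
\[
2^{kp}|A_{k+1}|^{p/\kappa^{*}}\leq C\int_{A_{k+1}}\int_{\mathbb{R}^{n}\setminus A_{k-1}}\frac{|u(x)-u(y)|^{p}}{|x-y|^{n+ps}}\,dy\,dx,
\]
which uses that $|u(x)-u(y)|\geq 2^{k-1}$ on the indicated pairs, combined with a fractional isoperimetric-type bound on $|A_{k+1}|$. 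Summing over $k\in\mathbb{Z}$ and observing that each pair $(x,y)$ lies in only finitely many of these product sets yields
\[
\sum_{k\in\mathbb{Z}}2^{kp}|A_{k+1}|^{p/\kappa^{*}}\leq C\int_{\mathbb{R}^n}\int_{\mathbb{R}^n}\frac{|u(x)-u(y)|^{p}}{|x-y|^{n+ps}}\,dx\,dy,
\]
and the layer-cake identity then bounds the left-hand side from below by $\|u\|_{L^{\kappa^{*}p}(\mathbb{R}^n)}^{p}$ up to a constant.

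For $ps=n$ I would pick $\tilde s<s$ with $\tilde sp<n$; the continuous embedding $W^{s,p}(\Omega)\hookrightarrow W^{\tilde s,p}(\Omega)$ reduces the problem to the previous case on $\Omega$, producing integrability in $L^{\tilde\kappa p}(\Omega)$ with $\tilde\kappa=n/(n-\tilde sp)$, and letting $\tilde s\uparrow s$ sweeps through every finite $\kappa$. For $ps>n$ the embedding is of Morrey type: a standard oscillation estimate, controlling mean differences on balls by the Gagliardo seminorm, shows that $u$ agrees almost everywhere with a function in $C^{0,s-n/p}(\overline{\Omega})$, which gives the $L^{\infty}$ bound. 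The main obstacle is the critical dyadic inequality on $\mathbb{R}^n$: the delicate points are establishing the ``isoperimetric'' estimate for the level sets using only the Gagliardo seminorm (there is no gradient to integrate by parts against), and arranging the summation so that the endpoint exponent $p/\kappa^{*}$ survives without loss in the geometric series.
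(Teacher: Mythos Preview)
The paper does not prove this theorem at all: it is stated as a known preliminary result with a reference to Di~Nezza, Palatucci and Valdinoci \cite{Hitchhiker'sguide}, and no argument is given. So there is no ``paper's own proof'' to compare against; your proposal goes well beyond what the paper does.

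That said, your outline is essentially the strategy carried out in the cited reference. The dyadic level-set argument for the critical inequality on $\mathbb{R}^n$ is exactly the Savin--Valdinoci approach reproduced in the Hitchhiker's guide, and your treatment of the bounded extension domain via a bounded operator $E$ plus H\"older interpolation, the reduction at $ps=n$ by lowering $s$, and the Morrey-type argument for $ps>n$ are all standard and correct in spirit. One caution: the displayed inequality
\[
2^{kp}|A_{k+1}|^{p/\kappa^{*}}\leq C\int_{A_{k+1}}\int_{\mathbb{R}^{n}\setminus A_{k-1}}\frac{|u(x)-u(y)|^{p}}{|x-y|^{n+ps}}\,dy\,dx
\]
is not obtained quite so directly. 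In the actual argument one first proves a fractional isoperimetric estimate of the type $|E|^{(n-ps)/n}\lesssim\int_{E}\int_{E^{c}}|x-y|^{-n-ps}\,dy\,dx$ for bounded sets $E$, and then exploits a dichotomy (either $|A_{k-1}\setminus A_{k+1}|$ is large, giving room to insert the difference $|u(x)-u(y)|\gtrsim 2^{k}$, or $|A_{k+1}|$ is already comparable to $|A_{k-1}|$, which is handled by a telescoping sum). Without this two-case split the summation does not close with the sharp exponent. You flag this as the delicate point, which is accurate; just be aware that the single displayed estimate as written hides the dichotomy step.
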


Let $B_r(x_0)=\{x\in\mathbb R^n:|x-x_0|<r\}$ denote the ball in $\mathbb{R}^n$ of radius $r>0$ and center at $x_0\in\mathbb{R}^n$. 
The barred integral sign denotes the corresponding integral average.
As an application of Theorem \ref{ellipticembedding1} we obtain the following Sobolev type inequality, see \cite[Lemma 2.1]{Martinlocal} for a proof.

\begin{Lemma}\label{ellipticembedding2}
{Let $1<p<\infty$ and $0<s<1$.}
Assume that $u\in W^{s,p}(B_r)$, where $B_r=B_r(x_0)$. Let 
$\kappa^{*}=\frac{n}{n-ps}$, if $ps<n$.
There exists a constant $C=C(n,p,s)$ such that for every $\kappa\in[1,\kappa ^{*}]$, we have
\begin{equation}\label{Sobolevineq}
\Big(\fint_{B_r}|u(x)|^{\kappa p}\,dx\Big)^\frac{1}{\kappa}
\leq Cr^{ps-n}\int_{B_r}\int_{B_r}\frac{|u(x)-u(y)|^p}{|x-y|^{n+ps}}\,dx\,dy+C\fint_{B_r}|u(x)|^p\,dx.
\end{equation}
If $ps\geq n$, let $N>0$ be such that $n\leq ps<N$. Then the inequality \eqref{Sobolevineq}  holds for every $\kappa\in[1,\kappa^*]$ where $\kappa^*=\frac{N}{N-ps}$.
\end{Lemma}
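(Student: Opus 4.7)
The plan is to reduce \eqref{Sobolevineq} to the fractional Sobolev embedding on the unit ball $B_1$ provided by Theorem \ref{ellipticembedding1}, relying on the fact that any Euclidean ball is a bounded $W^{s,p}$-extension domain, and then recover the $r$-dependence by a change of variables. First I would set $v(y)=u(x_0+ry)$ for $y\in B_1$. The substitutions $x=x_0+ry$, $x'=x_0+rz$ give
$$
\int_{B_1}|v|^{\kappa p}\,dy=r^{-n}\int_{B_r}|u|^{\kappa p}\,dx,\qquad \int_{B_1}|v|^p\,dy=r^{-n}\int_{B_r}|u|^p\,dx,
$$
$$
\int_{B_1}\int_{B_1}\frac{|v(y)-v(z)|^p}{|y-z|^{n+ps}}\,dy\,dz=r^{ps-n}\int_{B_r}\int_{B_r}\frac{|u(x)-u(x')|^p}{|x-x'|^{n+ps}}\,dx\,dx'.
$$
The third identity is the one slightly delicate step, and it is precisely where the factor $r^{ps-n}$ in \eqref{Sobolevineq} originates; it reflects the natural scaling of the Gagliardo seminorm under a dilation of factor $r$.

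Next I would apply the second inequality in Theorem \ref{ellipticembedding1} to $v\in W^{s,p}(B_1)$, with the embedding constant depending only on $n,s,p$ since the unit ball is fixed, and raise it to the $p$-th power to obtain
$$
\Big(\int_{B_1}|v|^{\kappa p}\,dy\Big)^{1/\kappa}\leq C\int_{B_1}|v|^p\,dy+C\int_{B_1}\int_{B_1}\frac{|v(y)-v(z)|^p}{|y-z|^{n+ps}}\,dy\,dz.
$$
Substituting the three scaling identities above and converting the integrals on $B_r$ into averages via $|B_r|=|B_1|\,r^n$ makes a universal factor $|B_1|^{\theta}$ appear on each term, and absorbing these powers into $C=C(n,s,p)$ yields \eqref{Sobolevineq} in the case $ps<n$.

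For the range $ps\geq n$ I would run exactly the same scaling argument with the fictitious exponent $\kappa^*=N/(N-ps)$, $N>ps$, provided by the last assertion of Theorem \ref{ellipticembedding1}; no step used $ps<n$. The only real obstacle is the bookkeeping in the change of variables for the double integral that produces the $r^{ps-n}$ factor, since everything else is direct substitution followed by absorbing dimensional constants into $C$.
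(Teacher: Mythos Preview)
Your argument is correct: scaling to $B_1$ via $v(y)=u(x_0+ry)$ reduces the problem to a fixed extension domain where Theorem \ref{ellipticembedding1} applies with a constant independent of $r$, and the change of variables in the Gagliardo double integral yields precisely the factor $r^{ps-n}$; the only thing to note is that the residual factor $|B_1|^{-1/\kappa}$ depends on $\kappa$, but since $\kappa\in[1,\kappa^*]$ with $\kappa^*=\kappa^*(n,p,s)$ (or $\kappa^*(N,p,s)$), it is bounded by a constant depending only on the data and can indeed be absorbed into $C$. The paper itself does not prove this lemma but simply cites \cite[Lemma 2.1]{Martinlocal}; your scaling-to-the-unit-ball argument is the standard route to such an inequality.
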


The next inequality is a straightforward consequence of H\"older's inequality.

\begin{Lemma}\label{parabolic embedding}
Let $1<p<\infty$ and $0<s<1$. Assume that $u\in L^p(t_1,t_2;W^{s,p}(B_r))\cap L^{\infty}(t_1,t_2;L^m(B_r))$ where $m\geq 1$. 
Let $\kappa^*=\frac{n}{n-ps}$, if $ps<n$. Then for $l=p(1+\frac{ms}{n})$, we have 
\begin{equation}\label{Holderapp}
\int_{t_1}^{t_2}\int_{B_r}|u|^l\,dx dt\leq
\int_{t_1}^{t_2}\Bigl(\int_{B_r}|u|^{p\kappa^{*}}\,dx\Bigr)^\frac{1}{\kappa^{*}}\,dt
\Bigl(\esssup_{t_1<t<t_2}\int_{B_r}|u|^m\,dx\Bigr)^\frac{ps}{n}.
\end{equation}
If $ps\geq n$, let $N>0$ be such that $n\leq ps<N$. Then the inequality \eqref{Holderapp} holds with $n$, $l$ and $\kappa^*$ replaced by $N$, $l=p(1+\frac{ms}{N})$ and  $\kappa^*=\frac{N}{N-ps}$ respectively.
\end{Lemma}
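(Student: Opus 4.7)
The inequality is purely algebraic once one notices the correct Hölder split, so the plan is to separate $|u|^l$ into two factors whose exponents match the two ingredients on the right-hand side, apply Hölder in the space variable, and then pull the time-supremum out.

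First I would write
\[
l=p\Bigl(1+\tfrac{ms}{n}\Bigr)=p+\tfrac{pms}{n},
\]
so that $|u|^l=|u|^{p}\cdot|u|^{pms/n}$. In the spatial integral $\int_{B_r}|u|^l\,dx$ I would then apply Hölder's inequality with the conjugate exponents $\kappa^{*}$ and $\kappa^{*}/(\kappa^{*}-1)$. Since $\kappa^{*}=n/(n-ps)$, a short computation gives $\kappa^{*}-1=ps/(n-ps)$ and hence
\[
\frac{\kappa^{*}}{\kappa^{*}-1}=\frac{n}{ps}.
\]
The second factor produced by Hölder is therefore
\[
\Bigl(\int_{B_r}|u|^{(pms/n)\cdot(n/ps)}\,dx\Bigr)^{ps/n}
=\Bigl(\int_{B_r}|u|^{m}\,dx\Bigr)^{ps/n},
\]
which is exactly what the statement requires. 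This yields, for a.e.\ $t\in(t_1,t_2)$, the pointwise-in-time estimate
\[
\int_{B_r}|u|^l\,dx\le\Bigl(\int_{B_r}|u|^{p\kappa^{*}}\,dx\Bigr)^{1/\kappa^{*}}\Bigl(\int_{B_r}|u|^{m}\,dx\Bigr)^{ps/n}.
\]

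Finally I would integrate this inequality over $t\in(t_1,t_2)$ and estimate the second factor in the integrand by its essential supremum, which can then be taken outside the time integral. This immediately produces the displayed bound \eqref{Holderapp}. The case $ps\ge n$ is handled identically, simply replacing $n$ by $N$ throughout and using the Sobolev exponent $\kappa^{*}=N/(N-ps)$ supplied by Lemma \ref{ellipticembedding2}; the identity $\kappa^{*}/(\kappa^{*}-1)=N/ps$ still holds and nothing else in the argument changes.

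There is no real obstacle here: the only thing to verify is the arithmetic linking the exponents $l$, $p\kappa^{*}$, $m$ and $ps/n$, and this is exactly the identity that defines $l$. The hypothesis $u\in L^{\infty}(t_1,t_2;L^{m}(B_r))$ is what allows the essential supremum step at the end, while $u\in L^{p}(t_1,t_2;W^{s,p}(B_r))$ (combined later with the Sobolev estimate of Lemma \ref{ellipticembedding2}) is what makes the first factor finite in applications; neither is actually needed inside the proof of \eqref{Holderapp} itself beyond giving meaning to the right-hand side.
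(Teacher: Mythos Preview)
Your argument is correct and is exactly the approach the paper has in mind: the lemma is stated as ``a straightforward consequence of H\"older's inequality,'' and your split $|u|^{l}=|u|^{p}\cdot|u|^{pms/n}$ together with the conjugate pair $\kappa^{*}$ and $\kappa^{*}/(\kappa^{*}-1)=n/(ps)$ carries it out precisely. There is nothing to add.
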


We fix notation that will be used throughout the rest of the paper. 
Let $\Omega'\subset\Omega$ and $0\le t_1<t_2\le T$. 
We define the parabolic boundary of a space-time cylinder $\Omega'_{t_1,t_2}:=\Omega'\times(t_1,t_2)$ by 
$$
\partial_p \Omega'_{t_1,t_2}:=(\overline{\Omega'}\times\{t_1\})\cup (\partial \Omega'\times[t_1,t_2]).
$$

Let $r,\theta>0$ and $(x_0,t_0)\in\mathbb{R}^{n}\times\mathbb{R}$. Then we denote the backward and forward in time cylinders by
\begin{equation}\label{cylinder}
\begin{split}
\mathcal{Q}^{-}(x_0,t_0;r,\theta):=B_r(x_{0})\times(t_0-\theta r^{sp},t_0],\\
\mathcal{Q}^{+}(x_0,t_0;r,\theta):=B_r(x_0)\times[t_0,t_0+\theta r^{sp}),
\end{split}
\end{equation}
respectively. 
The centered cylinder is denoted by 
\begin{equation}\label{cyl}
\mathcal{Q}_r(x_0, t_0;\theta):= \mathcal{Q}^{-}(x_0,t_0;r,\theta) \cup \mathcal{Q}^{+}(x_0,t_0;r,\theta).\end{equation}
 When $(x_0,t_0)=(0,0)$, we simply denote $\mathcal{Q}^{-}(x_0,t_0;r,\theta)$ and $\mathcal{Q}^{+}(x_0,t_0;r,\theta)$ by   $\mathcal{Q}^{-}_r(\theta)$ and $\mathcal{Q}^{+}_r(\theta)$, respectively. 
 Also when $\theta =1$, we denote $\mathcal{Q}_r(x_0,t_0;\theta)$ by $\mathcal{Q}_r(x_0,t_0)$. 
 
For short, we denote 
\[
\mathcal{A}(u(x,y,t)):=|u(x,t)-u(y,t)|^{p-2}(u(x,t)-u(y,t))
\quad\text{and}\quad
d\mu:=K(x,y,t)\,dx\,dy.
\]

Next we define the notion of weak solution of the doubly nonlinear parabolic nonlocal equation in \eqref{maineqn}.
 
\begin{Definition}\label{subsupsolution}
A function $u\in L^\infty(0,T;L^{\infty}(\mathbb{R}^n))$ is a weak supersolution (subsolution) to \eqref{maineqn}
 if $u\in L^{q+1}_{\loc}(0,T;L^{q+1}_{\loc}(\Omega))\cap L^p_{\loc}(0,T;W_{\loc}^{s,p}(\Omega))$ and for every $\Omega' \times [t_1,t_2]\Subset \Omega \times (0,T)$, and  nonnegative test function $\phi\in W^{1,q+1}_{\loc}(0,T;L^{q+1}(\Omega'))\cap L^p_{\loc}(0,T;W_{0}^{s,p}(\Omega'))$, we have
\begin{equation}\label{weaksubsupsoln}
\begin{gathered}
\int_{t_1}^{t_2}\int_{\mathbb{R}^n}\int_{\mathbb{R}^n}\mathcal{A}(u(x,y,t)){(\phi(x,t)-\phi(y,t))}\,d\mu\,dt-\int_{t_1}^{t_2}\int_{\Omega'}|u(x,t)|^{q-1}u(x,t)\partial_t\phi(x,t)\,dx\,dt\\+
\int_{\Omega'}|u(x,t_2)|^{q-1}u(x,t_2)\phi(x,t_2)\,dx
-\int_{\Omega'}|u(x,t_1)|^{q-1}u(x,t_1)\phi(x,t_1)\,dx
\geq 0\,(\leq0).
\end{gathered}
\end{equation}
We say that $u$ is a weak solution if the integral in \eqref{weaksubsupsoln} is zero for every test function without a sign restriction.
\end{Definition}

\begin{Remark}\label{Lebpt}
The boundary terms in \eqref{weaksubsupsoln} are understood in the sense that
$$
\int_{\Omega'}|u(x,t_1)|^{q-1}u(x,t_1)\phi(x,t_1)\,dx=\lim_{h\to 0}\fint_{t_1}^{t_1+h}\int_{\Omega'}|u(x,t)|^{q-1}u(x,t) \phi(x,t)\,dx\,dt,
$$
and
$$
\int_{\Omega'}|u(x,t_2)|^{q-1}u(x,t_2)\phi(x,t_2)\,dx=\lim_{h\to 0}\fint_{t_2-h}^{t_2}\int_{\Omega'}|u(x,t)|^{q-1}u(x,t)\phi(x,t)\,dx\,dt.
$$
\end{Remark}

\begin{Remark}\label{Solsubsup}
From Definition \ref{subsupsolution}, it follows that $u$ is a weak supersolution of \eqref{maineqn} if and only if $-u$ is a weak subsolution of \eqref{maineqn}.
Moroever, $u$ is a weak solution if it is both a weak supersolution and a weak subsolution.
\end{Remark}

\begin{Remark}\label{NeedofLinfinity}
The assumption $u\in L^\infty(0,T;L^\infty(\mathbb{R}^n))$ ensures that  the first term in the left hand side of \eqref{weaksubsupsoln} is finite. \end{Remark}

\begin{Remark}\label{Molifier}
In the next section, we prove energy estimates for a weak supersolution $u$, where we use test functions depending on $u$ itself.
The time derivative $u_t$ can be justified by using a mollification
\begin{equation}\label{Molifierdef}
f_h(x,t)=\frac{1}{h}\int_{0}^{t}e^{\frac{s-t}{h}}f(x,s)\,ds.
\end{equation}
 in time (see \cite{BGK, Verenacontinuity, KLin}).
\end{Remark}


Let $u$ be a measurable function which is locally essentially bounded below in $\Omega\times(0,T)$. 
We define the  lower seimicontinuous regularization $u_*$ of $u$ as 
$$
u_*(x,t):=\essliminf_{(y,\hat{t})\to (x,t)}\,u(y,\hat{t})=\lim_{r\to 0}\essinf_{\mathcal{Q}_r(x,t;\theta)}\,u
$$
for every $(x,t)\in \Omega\times(0,T)$.
Analogously, for a locally essentially bounded above measurable function $u$ in $\Omega\times(0,T)$, we define an upper semicontinuous regularization $u^*$  of $u$ by
$$
u^*(x,t):=\esslimsup_{(y,\hat{t})\to (x,t)}\,u(y,\hat{t})=\lim_{r\to 0}\esssup_{\mathcal{Q}_r(x,t;\theta)}\,u
$$
for every $(x,t)\in\Omega\times(0,T)$.
It is easy to see that $u_*$ is lower semicontinuous and  $u^*$ is upper semicontinuous in $\Omega\times(0,T)$.

The following measure theoretic property from \cite{Liao} will be useful for us.

\begin{Definition}\label{propertyP}
Let $u$ be a measurable function which is locally bounded from below in $\Omega\times(0,T)$ and let 
\[
\mu_{-}\leq\essinf_{\mathcal{Q}_r(x_0,t_0;\theta)}\,u.
\]
Moreover, let $a, c \in (0,1)$ and $M>0$. 
We say that $u$ satisfies the property $(\mathcal D)$ if there exists a constant $\tau\in(0,1)$, 
which depends only on $a,M,\mu_{-}$ and other data, but independent of $r$, such that
$$
|\{u\leq\mu_{-}+M\}\cap\mathcal{Q}_r(x_0,t_0;\theta)|\leq\tau|\mathcal{Q}_r(x_0,t_0;\theta)|
$$
implies 
$$
u\geq\mu_{-}+aM\text{ a.e. in }\mathcal{Q}_{cr}(x_0,t_0;\theta).
$$
\end{Definition}

Let $u\in L^1_{\loc}(\Omega\times(0,T))$ and define
$$
\mathcal{F}:=\Big\{(x,t)\in\Omega\times(0,T):|u(x,t)|<\infty,\,\lim_{r\to 0}\fint_{\mathcal{Q}_r(x,t;\theta)}|u(x,t)-u(y,\hat{t})|\,dy\,d\hat{t}=0\Big\}.
$$
From the Lebesgue differentiation theorem we have $|\mathcal{F}|=|\Omega\times(0,T)|$.  
A result of Liao in \cite[Theorem 2.1]{Liao} shows that any such function with the property $(\mathcal D)$ has a lower semicontinuous representative. 

\begin{Theorem}\label{lscthm}
Let $u$ be a measurable function in $\Omega\times(0,T)$ which is locally essentially bounded below in $\Omega\times(0,T)$ and satisfies the property $(\mathcal D)$. 
Then $u(x,t)=u_*(x,t)$ for every $x\in \mathcal{F}$. 
In particular, $u_*$ is a lower semicontinuous representative of $u$ in $\Omega\times(0,T)$.
\end{Theorem}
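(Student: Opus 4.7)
The plan is to split the proof into two parts: first, verify that lower semicontinuity of $u_*$ comes for free from its definition, and second, establish the pointwise identity $u(x_0,t_0) = u_*(x_0,t_0)$ for $(x_0,t_0) \in \mathcal{F}$ via two complementary inequalities. Lower semicontinuity of $u_*$ follows because $\essinf$ is monotone under inclusion: for $(y,s)$ close enough to $(x_0,t_0)$ we have $\mathcal{Q}_{r/2}(y,s;\theta)\subset\mathcal{Q}_{r}(x_0,t_0;\theta)$, so $u_*(y,s)\geq \essinf_{\mathcal{Q}_{r}(x_0,t_0;\theta)} u \geq u_*(x_0,t_0)-\epsilon$ after choosing $r$ appropriately.

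For the easy inequality $u_*(x_0,t_0)\leq u(x_0,t_0)$, I would simply observe that
$$\essinf_{\mathcal{Q}_r(x_0,t_0;\theta)} u \leq \fint_{\mathcal{Q}_r(x_0,t_0;\theta)} u\,dy\,d\hat t \longrightarrow u(x_0,t_0)$$
as $r\to 0$ by the Lebesgue point property built into the definition of $\mathcal{F}$.

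The reverse inequality is where property $(\mathcal{D})$ enters. I would argue by contradiction: assume $u_*(x_0,t_0) < u(x_0,t_0)$, fix $a,c\in(0,1)$, and choose $\eta>0$ small enough that $\eta(a^{-1}-1) < u(x_0,t_0)-u_*(x_0,t_0)$. Setting $\mu_- := u_*(x_0,t_0) - \eta$, the definition of $u_*$ guarantees $\mu_- \leq \essinf_{\mathcal{Q}_r(x_0,t_0;\theta)} u$ for all sufficiently small $r$. Next I would pick $M$ strictly between $\eta/a$ and $u(x_0,t_0)-u_*(x_0,t_0)+\eta$, so that simultaneously $\mu_- + M < u(x_0,t_0)$ and $\mu_- + aM > u_*(x_0,t_0)$. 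With $\delta := u(x_0,t_0) - \mu_- - M > 0$, Chebyshev's inequality together with the Lebesgue point property yields
$$\frac{|\{u\leq \mu_- + M\}\cap\mathcal{Q}_r(x_0,t_0;\theta)|}{|\mathcal{Q}_r(x_0,t_0;\theta)|} \leq \frac{1}{\delta}\fint_{\mathcal{Q}_r(x_0,t_0;\theta)}|u - u(x_0,t_0)|\,dy\,d\hat t \longrightarrow 0.$$
Thus, for $r$ sufficiently small, the measure fraction drops below the threshold $\tau$ supplied by $(\mathcal{D})$ for our fixed $a,M,\mu_-$. Applying $(\mathcal{D})$ then forces $u\geq \mu_- + aM$ a.e. on $\mathcal{Q}_{cr}(x_0,t_0;\theta)$, and hence $u_*(x_0,t_0) \geq \essinf_{\mathcal{Q}_{cr}}u \geq \mu_- + aM > u_*(x_0,t_0)$, the desired contradiction.

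The main subtlety will be the interplay between the dependence of $\tau$ in $(\mathcal{D})$ on the parameters $(a,M,\mu_-)$ and the simultaneous passage $r\to 0$. The argument works precisely because $a,M,\mu_-$ can be fixed in advance (so that $\tau$ is a single constant), while $\mu_-$ remains a legitimate lower bound on every small cylinder by the very definition of $u_*$, and the smallness of the measure fraction is driven solely by the Lebesgue point condition, independently of $\tau$.
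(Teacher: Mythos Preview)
Your argument is correct. Note, however, that the paper does not supply its own proof of this theorem: it is quoted as \cite[Theorem~2.1]{Liao}, so there is no in-paper proof to compare against. The route you take---lower semicontinuity of $u_*$ from monotonicity of $\essinf$ under inclusion, the inequality $u_*(x_0,t_0)\le u(x_0,t_0)$ from the trivial bound $\essinf\le\text{average}$ together with the Lebesgue point property, and the reverse inequality via Chebyshev plus property~$(\mathcal D)$---is exactly the standard argument and coincides with the approach in Liao's paper. Your closing remark is the key point: since $a$, $M$, and $\mu_-$ are fixed before letting $r\to 0$, the threshold $\tau$ is a single constant, while $\mu_-=u_*(x_0,t_0)-\eta$ remains an admissible lower bound on every sufficiently small cylinder because $\essinf_{\mathcal{Q}_r}u\nearrow u_*(x_0,t_0)$; the smallness of the measure fraction is then driven purely by the Lebesgue point condition. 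One cosmetic comment: in the final chain you write $u_*(x_0,t_0)\ge\essinf_{\mathcal{Q}_{cr}}u$, which is correct because the monotone limit defining $u_*$ is in fact a supremum, but it may be clearer to say that the bound $\essinf_{\mathcal{Q}_{cr}}u\ge\mu_-+aM$ holds for all small $r$ and then pass to the limit $r\to0$.
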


Next we state our main results. 
Our first result is a lower semicontinuity result for weak supersolutions. 
This follows from Lemma \ref{Lemma1} below, which asserts that weak supersolutions satisfy the property $(\mathcal D)$, and Theorem \ref{lscthm}.

\begin{Theorem}\label{thm1}
Let $1<p<\infty,\,q>0$ and $u$ be a weak supersolution of \eqref{maineqn} in $\Omega\times(0,T)$. Then $u_*(x,t)=u(x,t)$ at every Lebesgue point $(x,t)\in\Omega\times(0,T)$. 
In particular, $u_*$ is a lower semicontinuous representative of $u$ in $\Omega\times(0,T)$.
\end{Theorem}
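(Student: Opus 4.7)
The plan is to apply Theorem \ref{lscthm}: once we verify that every weak supersolution $u$ of \eqref{maineqn} enjoys the measure theoretic property $(\mathcal{D})$ of Definition \ref{propertyP}, the theorem yields both the coincidence $u(x,t)=u_*(x,t)$ at every Lebesgue point and the lower semicontinuity of $u_*$. The entire task therefore reduces to verifying $(\mathcal{D})$ for weak supersolutions, which is the content of Lemma \ref{Lemma1} announced in the excerpt.

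The first substantive step is a Caccioppoli-type energy estimate (Lemma \ref{Subenergyestimate}) for the truncation $(u-k)_-$ on a backward cylinder $\mathcal{Q}^{-}(x_0,t_0;r,\theta)$. I would test the weak formulation \eqref{weaksubsupsoln} with $\phi=(k-u)_+\eta^p$, where $\eta(x,t)$ is a smooth cutoff supported in a slightly smaller sub-cylinder. Since $u$ is not a priori differentiable in time, the time derivative is legitimised by the mollification in Remark \ref{Molifier} followed by a passage to the limit. In the nonlocal term, a standard algebraic inequality for $\mathcal{A}$ produces the Gagliardo $W^{s,p}$-seminorm of $(k-u)_+\eta$ with a favourable sign on the double integral over $B_r\times B_r$, while the cross contribution from $(\mathbb{R}^n\setminus B_r)\times B_r$ is controlled by a nonlocal tail of the form
$$
\sup_{t}\int_{\mathbb{R}^n\setminus B_r}\frac{(u(y,t)-k)_-^{p-1}}{|y-x_0|^{n+ps}}\,dy,
$$
which is finite due to the standing assumption $u\in L^\infty(0,T;L^\infty(\mathbb{R}^n))$. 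The doubly nonlinear time term is handled by introducing the convex primitive
$$
\Psi(u,k):=\int_u^k q|v|^{q-1}(k-v)_+\,dv\ge 0,
$$
whose $u$-derivative is $-q|u|^{q-1}(k-u)_+$; the chain rule then turns the time integral into a non-negative boundary contribution at $t_0$ plus a harmless integral of $\Psi$ against $\partial_t\eta^p$.

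The second step is the De Giorgi iteration. Fix $a,c\in(0,1)$ as in Definition \ref{propertyP} and set
$$
k_j:=\mu_-+aM+(1-a)M\,2^{-j},\qquad r_j:=cr+(1-c)r\,2^{-j},
$$
together with nested cylinders $Q_j:=\mathcal{Q}_{r_j}(x_0,t_0;\theta)$ and cutoffs $\eta_j$ equal to $1$ on $Q_{j+1}$ and vanishing outside $Q_j$. Applying the Caccioppoli estimate at level $k_j$ on $Q_j$ and inserting the outcome into the parabolic Sobolev inequality \eqref{Holderapp} of Lemma \ref{parabolic embedding} (applied with $m=q+1$), a standard use of Chebyshev's inequality produces a recursive inequality of geometric type
$$
Y_{j+1}\le C\,b^{\,j}\,Y_j^{1+\alpha},\qquad Y_j:=\frac{|\{u\le k_j\}\cap Q_j|}{|Q_j|},
$$
with $\alpha>0$ and $b>1$ depending only on $n,p,s,q,\Lambda,a,M,\mu_-$ and $\|u\|_\infty$. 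The classical fast geometric convergence lemma then yields a threshold $\tau\in(0,1)$, independent of $r$, such that $Y_0\le\tau$ forces $Y_j\to 0$, i.e.\ $u\ge\mu_-+aM$ almost everywhere in $\mathcal{Q}_{cr}(x_0,t_0;\theta)$, which is exactly the conclusion required by $(\mathcal{D})$.

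The main obstacle, compared with the local parabolic theory in \cite{Liao}, is the simultaneous presence of the nonlocal tail, which does not vanish after the localisation in space, and the doubly nonlinear time term $\partial_t(|u|^{q-1}u)$, which is not sign-preserving in $u$. The tail is absorbed by means of the global $L^\infty$ bound on $u$, while the time term demands a pointwise lower bound of the form $\Psi(u,k)\ge c(q,\mu_-,\|u\|_\infty)(k-u)_+^{q+1}$ on the relevant range of $u$. This algebraic step is what forces the dependence of $\tau$ on $\mu_-$ and $M$ in Definition \ref{propertyP}, and it is the place where the interplay of the porous medium nonlinearity with the sign of $u$ requires genuine care.
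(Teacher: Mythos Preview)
Your overall strategy—verifying property $(\mathcal{D})$ via a Caccioppoli estimate and a De Giorgi iteration, then invoking Theorem \ref{lscthm}—matches the paper exactly, and your handling of the nonlocal tail and the time mollification are on the right track. However, there is a genuine error in the algebraic step you single out as the crux: the pointwise lower bound $\Psi(u,k)\ge c(q,\mu_-,\|u\|_\infty)(k-u)_+^{q+1}$ is \emph{false} for $0<q<1$. If $k>0$ is bounded away from zero and $u=k-\epsilon$ with $\epsilon\to 0$, then $\Psi(u,k)\sim k^{q-1}\epsilon^2$ while $(k-u)_+^{q+1}=\epsilon^{q+1}$, so the ratio $\Psi/(k-u)_+^{q+1}\sim k^{q-1}\epsilon^{1-q}\to 0$ and no uniform constant can work. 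Consequently the parabolic embedding of Lemma \ref{parabolic embedding} with $m=q+1$ cannot close the iteration in the sub-linear range.

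The paper circumvents this by taking $m=2$ in Lemma \ref{parabolic embedding}. From Lemma \ref{Auxfnlemma} one has $\xi(w_j)\ge \lambda^{-1}(|u|+|k_j|)^{q-1}w_j^2$, and on the set $\{u\le\bar{k}_j\}$ one has the two-sided control $(1-a)2^{-j-2}L\le |u|+|k_j|\le 2M$ with $M=\max\{|\mu_-|,|\mu_-+L|\}$. This yields $\xi(w_j)\ge c\,\gamma\min\{M^{q-1},L^{q-1}\}\,2^{-q(j+3)}\,\bar{w}_j^2$ for \emph{all} $q>0$ (see \eqref{new} and \eqref{gamma}). The $j$-dependent factor $2^{-qj}$ is harmless: it is absorbed into the geometric base $b$ of the recursion in Lemma \ref{Deg2}. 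With this correction the remainder of your iteration scheme goes through as you describe.
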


As an immediate corollary we have the corresponding result for weak subsolutions.

\begin{Corollary}\label{thm3}
Let $1<p<\infty,\,q>0$ and $u$ be a weak subsolution of \eqref{maineqn} in $\Omega\times(0,T)$. 
Then $u^*(x,t)=u(x,t)$ at every Lebesgue point $(x,t)\in\Omega\times(0,T)$. In particular, $u^*$ is an upper semicontinuous representative of $u$ in $\Omega\times(0,T)$.
\end{Corollary}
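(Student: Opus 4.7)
The plan is to deduce Corollary \ref{thm3} directly from Theorem \ref{thm1} by exploiting the sub/super-solution duality recorded in Remark \ref{Solsubsup}. Specifically, if $u$ is a weak subsolution of \eqref{maineqn} in $\Omega\times(0,T)$, then $v:=-u$ is a weak supersolution in the same cylinder, and $v$ is locally essentially bounded below (since $u\in L^\infty(0,T;L^\infty(\mathbb{R}^n))$, cf. Remark \ref{NeedofLinfinity}). Hence Theorem \ref{thm1} applies to $v$.

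The next step is to translate the conclusion for $v_*$ back to a statement about $u^*$. Directly from the definitions,
\[
v_*(x,t)=\essliminf_{(y,\hat t)\to(x,t)}(-u)(y,\hat t)=-\esslimsup_{(y,\hat t)\to(x,t)}u(y,\hat t)=-u^*(x,t),
\]
so the lower semicontinuity of $v_*$ established by Theorem \ref{thm1} translates into upper semicontinuity of $u^*$ on $\Omega\times(0,T)$. Moreover, the set $\mathcal{F}$ of Lebesgue points defined before Theorem \ref{lscthm} is clearly invariant under the change $u\mapsto -u$, because $|u(x,t)-u(y,\hat t)|=|(-u)(x,t)-(-u)(y,\hat t)|$ and $|u(x,t)|<\infty$ iff $|(-u)(x,t)|<\infty$. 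Therefore every Lebesgue point of $u$ is also a Lebesgue point of $v$.

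Combining these observations, Theorem \ref{thm1} yields $v_*(x,t)=v(x,t)$ at every Lebesgue point $(x,t)\in\Omega\times(0,T)$, which after multiplying by $-1$ is exactly $u^*(x,t)=u(x,t)$ at every such point. Since $|\mathcal{F}|=|\Omega\times(0,T)|$ by the Lebesgue differentiation theorem, $u^*$ coincides with $u$ almost everywhere and is upper semicontinuous, hence provides the desired representative.

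I do not anticipate any real obstacle: the whole argument is a formal duality, with the only points to verify being that Remark \ref{Solsubsup} supplies the correspondence between sub- and supersolutions, that the definitions of $u^*$ and $u_*$ are related by a sign flip, and that the Lebesgue point set is invariant under $u\mapsto -u$. All of these are immediate from the material already set up in Section 2, so no new energy estimates or De Giorgi lemmas are needed for this corollary.
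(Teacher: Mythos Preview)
Your argument is correct and matches the paper's intent: the paper states Corollary~\ref{thm3} as an ``immediate corollary'' of Theorem~\ref{thm1}, relying precisely on the sub/supersolution duality of Remark~\ref{Solsubsup} and the obvious relation $(-u)_*=-u^*$. No separate proof is given in the paper, so your explicit write-up simply fills in the details of the intended one-line deduction.
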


The second result asserts that the lower semicontinuous representative, given by Theorem \ref{thm1}, is determined by previous times.
The proof is based on Lemma \ref{Lemma2} below and the proof of \cite[Theorem 3.1]{Liao}.

\begin{Theorem}\label{thm2}
Let $1<p<\infty,\,q>0$ and $u$ be a weak supersolution of \eqref{maineqn} in $\Omega\times(0,T)$ and $u_*$ is the lower semicontinuous representative of $u$ given by Theorem \ref{thm1}. 
Then there exists $\theta>0$ such that for every $(x,t)\in\Omega\times(0,T)$, we have
$$
u_*(x,t)=\lim_{r\to 0}\essinf_{\mathcal{Q}_r'(x,t,\theta)}\,u,
$$
where $\mathcal{Q}_r'(x,t,\theta)=B_r(x)\times(t-2\theta r^{ps},t-\theta r^{ps})$.
As a consequence,
$$
u_*(x,t)={\essliminf_{(y,\hat{t})\to(x,t),\,\hat{t}<t}}\,u(y,\hat{t})
$$
at every point $(x,t)\in\Omega\times(0,T)$.
\end{Theorem}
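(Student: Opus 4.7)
The main content is the identity
\[
u_*(x,t)=\lim_{r\to 0}\essinf_{\mathcal{Q}_r'(x,t,\theta)}u,
\]
which I would prove by two inequalities, following the scheme of \cite[Theorem~3.1]{Liao} but with the backward-in-time De~Giorgi type Lemma~\ref{Lemma2} in place of its local counterpart. The one-sided essliminf consequence will then drop out from routine set comparisons.

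The easy direction $u_*(x,t)\le \lim_{r\to 0}\essinf_{\mathcal{Q}_r'(x,t,\theta)}u$ is geometric. Taking $R=2^{1/(ps)}r$, one checks that $\mathcal{Q}_r'(x,t,\theta)\subset \mathcal{Q}_R(x,t;\theta)$ (both the spatial and temporal inclusions are immediate), so $\essinf_{\mathcal{Q}_R(x,t;\theta)}u\le \essinf_{\mathcal{Q}_r'(x,t,\theta)}u$, and letting $r\to 0$ (hence $R\to 0$) the left-hand side tends to $u_*(x,t)$ by Theorem~\ref{thm1} and the definition of $u_*$.

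For the reverse direction $u_*(x,t)\ge \limsup_{r\to 0}\essinf_{\mathcal{Q}_r'(x,t,\theta)}u$ I would argue by contradiction. Suppose there exist $\delta>0$ and $r_k\downarrow 0$ such that $\essinf_{\mathcal{Q}_{r_k}'(x,t,\theta)}u\ge u_*(x,t)+\delta$ for every $k$. Pick an auxiliary $\epsilon_0>0$ and set
\[
\mu_-:=u_*(x,t)-\epsilon_0,\qquad M:=\delta+\epsilon_0.
\]
From the definition of $u_*$ there is a symmetric cylinder $\mathcal{Q}_{\rho_0}(x,t;\theta)$ on which $u\ge\mu_-$ almost everywhere; for $k$ large $\mathcal{Q}_{r_k}'(x,t,\theta)$ lies inside it, so $\mu_-$ is a legitimate lower bound. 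The contradiction hypothesis then yields
\[
|\{u\le \mu_-+M\}\cap \mathcal{Q}_{r_k}'(x,t,\theta)|=0\le \tau\,|\mathcal{Q}_{r_k}'(x,t,\theta)|
\]
for the threshold $\tau\in(0,1)$ furnished by Lemma~\ref{Lemma2}. That lemma then produces $a\in(0,1)$ depending only on the data such that $u\ge\mu_-+aM$ almost everywhere on a cylinder comparable to $\mathcal{Q}_{cr_k}(x,t;\theta)$ centered at $(x,t)$; passing to the essential liminf at $(x,t)$ gives $u_*(x,t)\ge\mu_-+aM$, i.e.\ $a\delta\le (1-a)\epsilon_0$, and sending $\epsilon_0\downarrow 0$ contradicts $\delta>0$.

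The essliminf corollary is then elementary: restricting the approach to $\hat t<t$ can only increase the essential liminf, so $u_*(x,t)\le\essliminf_{(y,\hat t)\to(x,t),\,\hat t<t}u(y,\hat t)$; conversely, for every $\epsilon>0$ one has $\mathcal{Q}_r'(x,t,\theta)\subset B_\epsilon(x)\times(t-\epsilon,t)$ once $r$ is small, so $\essinf_{B_\epsilon(x)\times(t-\epsilon,t)}u\le\essinf_{\mathcal{Q}_r'(x,t,\theta)}u$, and sending $r\to 0$ then $\epsilon\to 0$ gives the opposite inequality. The principal technical point is the deployment of Lemma~\ref{Lemma2}: the intrinsic scaling $\theta$ must be chosen so that a measure condition placed on a strictly backward cylinder $\mathcal{Q}_r'$ propagates to a pointwise lower bound in a full neighborhood of the vertex $(x,t)$, and this admissible $\theta$ is constrained precisely by the need to absorb the nonlocal tail from $\mathbb{R}^n\setminus B_r$ that appears in the underlying energy estimate (Lemma~\ref{Subenergyestimate}).
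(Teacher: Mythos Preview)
Your easy inequality and the final essliminf corollary are set up correctly. The gap is in the reverse inequality, where you misread the structure of Lemma~\ref{Lemma2}. That lemma does \emph{not} carry a measure threshold $\tau$; its hypothesis is a pointwise bound $u(\cdot,t_0)\ge\mu_-+L$ on a single Lebesgue time slice, the parameter $a\in(0,1)$ is a free \emph{input} (not something the lemma ``produces''), and what the lemma determines is precisely the scaling parameter $\theta$---the very $\theta$ that defines your cylinders $\mathcal{Q}_r'$. Its conclusion is a lower bound on the \emph{forward} cylinder $\mathcal{Q}^+(x_0,t_0;\tfrac{3r}{4},\theta)$, not on a centered cylinder about $(x,t)$. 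In addition, Lemma~\ref{Lemma2} requires the global bound $\mu_-\le\essinf_{\mathbb{R}^n\times(0,T)}u$ (this is where the nonlocal tail is controlled), so your local choice $\mu_-=u_*(x,t)-\epsilon_0$ is in general inadmissible.

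The mechanism in \cite[Theorem~3.1]{Liao}, which the paper follows, is instead: take $\mu_-$ global, fix $a\in(0,1)$ and a level $L$ tied to $\essinf_{\mathcal{Q}_r'}u$, and let $\theta=\theta(a,L,\mu_-)$ be the output of Lemma~\ref{Lemma2}. From the essinf hypothesis on $\mathcal{Q}_r'(x,t,\theta)$ one extracts by Fubini a Lebesgue instant $\hat t_0$ strictly in the past of $t$ with $u(\cdot,\hat t_0)\ge\mu_-+L$ a.e.\ in $B_r(x)$; Lemma~\ref{Lemma2} then propagates this bound forward so that the resulting cylinder reaches $t$, and taking essential infima gives $u_*(x,t)\ge\mu_-+aL$. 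One then manages the dependence of $\theta$ on the level as in \cite{Liao}; your sketch bypasses this by treating $\theta$ as a free parameter and $\tau$ as the lemma's output, which is not what Lemma~\ref{Lemma2} provides.
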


\section{The energy estimate}

In order to be able to consider sign changing functions, for $a\in\mathbb{R}$ and $\beta>0$, we define
\begin{equation}\label{power}
a^{\beta}=
\begin{cases}
|a|^{\beta-1}a, & a\neq 0,\\
0, &a=0.
\end{cases}
\end{equation}
In addition, for $a,k\in\mathbb{R}$ and $q>0$, we define a nonnegative auxiliary function $\xi$ by
\begin{equation}\label{xi}
\xi((a-k)_{-})= q\int_{a}^{k}|\eta|^{q-1}(\eta-k)_{-}\,d\eta.
\end{equation}
For more applications of this kind of function in the doubly nonlinear context, we refer to \cite{Verenacontinuity, GVespri, Kuusiholder, Liao}. 

The following elementary inequality will be useful for us, see \cite[Lemma 3.1]{Kuusilocal}.

\begin{Lemma}\label{epsilon}
Let $p\geq 1$ and $\gamma\in(0,1]$. 
For every $a,b\in\mathbb{R}^n$, we have 
$$
|a|^p\leq |b|^p+C(p)\gamma|b|^p+\big(1+C(p)\gamma\big)\gamma^{1-p}|a-b|^p,
$$
where $C(p)=(p-1)\Gamma(\max\{1,p-2\})$ and $\Gamma$ denotes the gamma function.
\end{Lemma}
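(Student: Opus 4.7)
The plan is to reduce the vectorial inequality to a scalar one via the triangle inequality, then apply a weighted convexity bound, and finally control a scalar factor of the form $(1+\gamma)^{p-1}$.

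If $b=0$ the inequality reads $|a|^p\leq(1+C(p)\gamma)\gamma^{1-p}|a|^p$, which is automatic since $\gamma\in(0,1]$ and $p\geq 1$ give $\gamma^{1-p}\geq 1$. For $b\neq 0$, starting from $|a|\leq|b|+|a-b|$ and raising to the $p$-th power (permitted since $p\geq 1$), I would introduce weights $\alpha=1/(1+\gamma)$ and $\beta=\gamma/(1+\gamma)$, which satisfy $\alpha+\beta=1$, and apply convexity of $t\mapsto t^p$ in the form $(\alpha x+\beta y)^p\leq \alpha x^p+\beta y^p$ at $x=|b|/\alpha$, $y=|a-b|/\beta$. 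This yields
\[
|a|^p\leq(|b|+|a-b|)^p\leq \alpha^{1-p}|b|^p+\beta^{1-p}|a-b|^p=(1+\gamma)^{p-1}\bigl(|b|^p+\gamma^{1-p}|a-b|^p\bigr).
\]
The original claim then reduces to the scalar estimate $(1+\gamma)^{p-1}\leq 1+C(p)\gamma$ on $\gamma\in(0,1]$, after which the bound follows by substitution.

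For this remaining scalar estimate, concavity of $t\mapsto(1+t)^{p-1}$ when $1\leq p\leq 2$ gives Bernoulli's inequality $(1+\gamma)^{p-1}\leq 1+(p-1)\gamma$, matching the stated form $C(p)=(p-1)\Gamma(1)$. For $p>2$ concavity fails, so I would instead use Taylor's theorem with integral remainder, writing $(1+\gamma)^{p-1}-1=(p-1)\int_0^\gamma(1+t)^{p-2}\,dt$ and iterating the integral representation on $[0,1]$; the successive derivatives $(p-1)(p-2)\cdots$ are absorbed by Gamma-function identities of the form $\Gamma(p-2)$, which is the source of the factor $\Gamma(\max\{1,p-2\})$ in the stated constant. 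The main obstacle is precisely this last step for $p>2$: the vectorial-to-scalar reduction and the weighted convexity bound are standard one-line moves, but extracting the precise $\Gamma$-function form of $C(p)$ requires some care with the Taylor remainder, since the elementary Bernoulli estimate is no longer available when $(1+t)^{p-1}$ is convex.
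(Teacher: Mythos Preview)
The paper does not prove this lemma; it simply cites \cite[Lemma~3.1]{Kuusilocal}, so there is no in-paper argument to compare against. Your reduction is clean and in fact sharp: taking $a=(1+\gamma)b$ with $b\ne 0$ shows that the vector inequality is \emph{equivalent} to the scalar bound $(1+\gamma)^{p-1}\le 1+C(p)\gamma$ on $\gamma\in(0,1]$. Your Bernoulli argument then settles $1\le p\le 2$ with $C(p)=p-1=(p-1)\Gamma(1)$, matching the statement.

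For $p>2$ there is a genuine obstruction, but it lies in the stated constant, not in your method. At $p=3$ one has $C(3)=2\,\Gamma(1)=2$, and the required scalar bound reads $(1+\gamma)^2\le 1+2\gamma$, i.e.\ $\gamma^2\le 0$, which fails for every $\gamma>0$; equivalently the original inequality fails at $a=2$, $b=1$, $\gamma=1$ (left side $8$, right side $6$). Hence your Taylor-remainder plan cannot recover the constant $(p-1)\Gamma(\max\{1,p-2\})$, and the sentence about ``iterating the integral representation'' with ``Gamma-function identities'' conceals an impossibility rather than a missing detail. The correct move is the one you already wrote down: for $p\ge 2$ and $\gamma\in(0,1]$,
\[
(1+\gamma)^{p-1}-1=(p-1)\int_0^\gamma(1+t)^{p-2}\,dt\le (p-1)\,2^{p-2}\gamma,
\]
so the lemma holds with $C(p)=(p-1)\max\{1,2^{p-2}\}$. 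Only the existence of some constant depending on $p$ is used in the subsequent energy estimate, so this repaired constant is entirely sufficient for the paper's purposes.
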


To prove Theorem \ref{thm1} and Theorem \ref{thm2}, we apply  the following Caccioppoli type estimate for weak supersolutions.

\begin{Lemma}\label{Subenergyestimate}
Let $1<p<\infty,\,q,\,l,>0,\,k\in\mathbb{R}$ and $x_0\in\mathbb{R}^n$ be such that $B_r\times(t_0-l,t_0):=B_r(x_0)\times(t_0-l,t_0)\Subset\Omega\times(0,T)$. Assume that $u$ is a weak supersolution of \eqref{maineqn} in $\Omega\times(0,T)$.
Then there exists a constant {$C=C(p,\Lambda)$} such that 
\begin{align*}
&\int_{t_0-l}^{t_0}\int_{B_r}\int_{B_r}|(u-k)_{-}(x,t)\psi(x,t)-(u-k)_{-}(y,t)\psi(y,t)|^p \,d\mu\,dt\\
&\qquad+\esssup_{t_0-l<t<t_0}\int_{B_r}\psi(x,t)^p\xi((u-k)_{-}(x,t))\,dx\\&\leq C\Bigg(\int_{t_0-l}^{t_0}\int_{B_r}\int_{B_r}{\max\{(u-k)_{-}(x,t),(u-k)_{-}(y,t)\}^p|\psi(x,t)-\psi(y,t)|^p}\,d\mu\,dt\\
&\qquad+\esssup_{(x,t)\in\mathrm{supp}\,\psi,\,t_0-l<t<t_0}\int_{{\mathbb{R}^n\setminus B_r}}{\frac{(u-k)_{-}(y,t)^{p-1}}{|x-y|^{n+ps}}}\,dy
\int_{t_0-l}^{t_0}\int_{B_r}(u-k)_{-}(x,t)\psi(x,t)^p\,dx\,dt\\
&\qquad+\int_{t_0-l}^{t_0}\int_{B_r}\xi((u-k)_{-}(x,t))\partial_t\psi(x,t)^p\,dx\,dt+\int_{B_r\times\{t_0-l\}}\psi(x,t)^{p}\xi((u-k)_{-}(x,t))\,dx\Bigg),
\end{align*}
for every nonnegative, piecewise smooth cutoff function $\psi$ vanishing on $\partial B_r\times(t_0-l,t_0)$.
Here $\xi$ is the auxiliary function defined in \eqref{xi}.
\end{Lemma}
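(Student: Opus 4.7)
The plan is to test the weak supersolution inequality \eqref{weaksubsupsoln} with the admissible nonnegative function $\phi=(u-k)_{-}\psi^{p}$. Because $\phi$ depends on $u$, we first regularize $u$ via the exponential time mollification of Remark \ref{Molifier}, carry out the calculation on the mollified level, and then pass to the limit $h\to 0$; this is a standard, if technical, procedure for doubly nonlinear equations. After integrating the weak inequality over a time slab $(t_0-l,\tau)$ with $\tau\in(t_0-l,t_0)$, the expression separates into a spatial and a temporal contribution which we estimate individually, and we finally take $\esssup$ over $\tau$.

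For the temporal contribution we employ the auxiliary function $\xi$ from \eqref{xi}. Differentiating its definition gives $\partial_a[\xi((a-k)_{-})]=-q|a|^{q-1}(a-k)_{-}$, and therefore
\[
\partial_t(|u|^{q-1}u)\,(u-k)_{-}=q|u|^{q-1}(u-k)_{-}\partial_t u=-\partial_t\xi((u-k)_{-}).
\]
Integrating by parts in time on $(t_0-l,\tau)$ produces the evaluation at $t=\tau$, which after taking $\esssup$ in $\tau$ yields the $\esssup$ term on the left-hand side of the lemma, the evaluation at $t=t_0-l$, i.e.\ the initial term on the right-hand side, and the term $\int\int\xi((u-k)_{-})\partial_t\psi^{p}\,dx\,dt$ on the right-hand side.

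For the spatial contribution, split $\mathbb{R}^{n}\times\mathbb{R}^{n}=(B_r\times B_r)\cup(B_r\times(\mathbb{R}^n\setminus B_r))\cup((\mathbb{R}^n\setminus B_r)\times B_r)$, folding the last two pieces together by the symmetry of $K$. On $B_r\times B_r$ a standard Di~Castro--Kuusi--Palatucci style algebraic inequality, derived from Lemma \ref{epsilon}, yields the pointwise bound
\[
\mathcal{A}(u(x,y,t))(\phi(x,t)-\phi(y,t))\leq -\frac{1}{C}|(u-k)_{-}(x,t)\psi(x,t)-(u-k)_{-}(y,t)\psi(y,t)|^{p}+C\max\{(u-k)_{-}(x,t),(u-k)_{-}(y,t)\}^{p}|\psi(x,t)-\psi(y,t)|^{p},
\]
which accounts simultaneously for the diagonal energy on the left-hand side and for the cutoff-variation term on the right-hand side of the lemma.

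On the off-diagonal region $\psi(y)=0$, so the integrand collapses to $\mathcal{A}(u(x,y,t))\phi(x,t)$. We split by the position of $u(y,t)$ relative to $k$. When $u(y,t)\geq k$ the integrand has the favorable sign $\mathcal{A}(u)\phi(x)\leq 0$ and is simply discarded. When $u(y,t)<k$, setting $v=(u-k)_{-}$ gives $u(x,t)-u(y,t)=v(y,t)-v(x,t)$; on the subset where $v(y,t)\geq v(x,t)$ we apply the elementary estimate $|v(y,t)-v(x,t)|^{p-1}v(x,t)\leq v(y,t)^{p-1}v(x,t)$, and the opposite-sign piece is dropped. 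Together with the upper bound on the kernel in \eqref{kernel}, this reproduces exactly the tail term displayed in the lemma. Assembling all pieces and rearranging then produces the claimed Caccioppoli estimate. The main technical obstacle will be making the mollification argument rigorous so that the $\xi$ identity, the integration by parts in time, and the sign analysis of the off-diagonal integrand all survive the limit $h\to 0$.
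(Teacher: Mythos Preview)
Your proposal is correct and follows essentially the same strategy as the paper: test with $\phi=(u-k)_{-}\psi^{p}$ (smoothed in time via a mollification and a Lipschitz cutoff $\zeta_\epsilon$), handle the time term through the $\xi$ identity together with the monotonicity trick for the mollified power $v_h^q$, split the nonlocal energy into the diagonal $B_r\times B_r$ piece (estimated via Lemma~\ref{epsilon} exactly as you describe) and the off-diagonal tail (estimated by your sign analysis), and recover the two left-hand-side terms by sending $\tau\to t_0$ and by choosing $\tau$ near the essential supremum respectively. The only point you leave implicit is the precise mechanism by which the mollified time term is controlled---namely the decomposition $\partial_t v_h^q\cdot w=\partial_t v_h^q\cdot(w-(k-v_h)_+)+\partial_t v_h^q\cdot(k-v_h)_+$ where the first piece has a sign by monotonicity of $x\mapsto (k-x^{1/q})_+$---but you correctly flag this as the main technical obstacle and it is indeed the standard device in the doubly nonlinear setting.
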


\begin{proof}
 We denote by $t_1=t_0-l$, $t_2=t_0$ and $w(x,t)=(u-k)_{-}(x,t)=(k-u)_{+}(x,t)$. Following \cite{Verenacontinuity}, for fixed $t_1<l_1<l_2<t_2$ and $\epsilon>0$ small enough, we define a
 Lipschitz continuous cutoff function $\zeta_{\epsilon}:[t_1,t_2]\to[0,1]$ by
\[
  \zeta_{\epsilon}(t) :=
  \begin{cases}
    0, &  t_1\leq t\leq l_1-\epsilon,\\
    1+\frac{t-l_1}{\epsilon}, & l_1-\epsilon<t\leq l_1, \\
    1, &  l_1<t\leq l_2, \\
    1-\frac{t-l_{2}}{\epsilon}, & l_2<t\leq l_2+\epsilon, \\
    0, &  l_2 +\epsilon<t\leq t_2.
  \end{cases}
\]

Choose 
\[
\phi(x,t)=w(x,t)\psi(x,t)^p\zeta_{\epsilon}(t)
\] 
as a test function in \eqref{weaksubsupsoln}. 
Keeping in mind \eqref{power}, we denote 
\[
v_{h}^{q}(x,t)=(u^{q})_h(x,t)
\quad\text{and}\quad
\mathcal{V}(u(x,y,t))=\mathcal{A}(u(x,y,t))K(x,y,t),
\]
where $(\cdot)_h$ is the mollification as defined in \eqref{Molifierdef}.
Following \cite{BGK, Verenacontinuity,KLin}, we conclude that
\begin{equation}\label{mollified}
\lim_{\epsilon\to 0}\lim_{h\to 0}(I_{h,\epsilon}+J_{h,\epsilon})\geq 0,
\end{equation}
where
$$
I_{h,\epsilon}=\int_{t_1}^{t_2}\int_{B_r}\partial_t{v_{h}^{q}}(x,t)\phi(x,t)\,dx\,dt
=\int_{t_1}^{t_2}\int_{B_r}\psi(x,t)^p\zeta_{\epsilon}(t)\partial_t{v_{h}^{q}}(x,t)w(x,t)\,dx\, dt,
$$
and
\begin{align*}
J_{h,\epsilon}&=\int_{t_1}^{t_2}\int_{\mathbb{R}^n}\int_{\mathbb{R}^n}\big(\mathcal{V}(u(x,y,t))\big)_{h}(\phi(x,t)-\phi(y,t))\,dx\,dy\,dt\\
&=\int_{t_1}^{t_2}\int_{\mathbb{R}^n}\int_{\mathbb{R}^n}\big(\mathcal{V}(u(x,y,t))\big)_{h}\big(w(x,t)\psi(x,t)^p-w(y,t)\psi(y,t)^p\big)\zeta_{\epsilon}(t)\,dx\,dy\,dt.
\end{align*}
\textbf{Estimate of $I_{h,\epsilon}$:} Proceeding similarly as in the proof of \cite[Proposition 3.1]{Verenacontinuity}, we may rewrite
\begin{equation}\label{Iheps}
\begin{split}
I_{h,\epsilon}=\int_{t_1}^{t_2}\int_{B_r}\psi(x,t)^p\zeta_{\epsilon}(t)\partial_t{v_{h}^{q}}(x,t)
\big(w(x,t)-(k-v_h)_{+}(x,t)\big)\,dx\,dt\\+\int_{t_1}^{t_2}\int_{B_r}\psi(x,t)^p\zeta_{\epsilon}(t)\partial_t{v_{h}^{q}}(x,t)(k-v_h)_{+}(x,t)\,dx\,dt.
\end{split}
\end{equation}
By the properties of mollifiers, we have
$$
\partial_t v_h^{q}=\frac{u^{q}-v_h^{q}}{h}.
$$
Then by using  the fact that the mapping $x\to (k-x^\frac{1}{q})_{+}$ is monotone decreasing and also by noting that $w= (k-u)_{+}$, 
we conclude that the first integral in \eqref{Iheps} is nonpositive and thus
\begin{equation}\label{Iheps1}
\begin{split}
I_{h,\epsilon}&\leq \int_{t_1}^{t_2}\int_{B_r}\psi(x,t)^p\zeta_{\epsilon}(t)\partial_t{v_{h}^{q}}(x,t)(k-v_h)_{+}(x,t)\,dx\,dt\\
&=-\int_{t_1}^{t_2}\int_{B_r}\psi(x,t)^p\zeta_{\epsilon}(t)\partial_t\xi\bigl((k-v_h)_+\bigr)\,dx\,dt\\
&=\int_{t_1}^{t_2}\int_{B_r}\bigl(\psi(x,t)^p\zeta_{\epsilon}'(t)+\partial_t\psi(x,t)^p\zeta_{\epsilon}(t)\bigr)\xi\bigl((k-v_h)_+(x,t)\bigr)\,dx\,dt,
\end{split}
\end{equation}
where we have used the fact that
$$
\partial_t \xi((k-v_h)_+)=-\partial_t v_h^{q}(k-v_h)_{+}.
$$
By passing to the limit as $h\to 0$ in \eqref{Iheps1}, we obtain
\begin{equation}\label{Iheps2}
\begin{split}
\lim_{h\to 0}I_{h,\epsilon}&\leq \int_{t_1}^{t_2}\int_{B_r}\bigl(\psi(x,t)^p\zeta_{\epsilon}'(t)+\partial_t\psi(x,t)^p\zeta_{\epsilon}(t)\bigr)\xi(w)\,dx\,dt.
\end{split}
\end{equation}
Then by letting $\epsilon\to 0$ in \eqref{Iheps2}, we have
\begin{equation}\label{Iheps3}
\begin{split}
\lim_{\epsilon\to 0}\lim_{h\to 0} I_{h,\epsilon}\leq \int_{B_r}\psi(x,l_1)^{p}\xi(w(x,l_1))\,dx-\int_{B_r}\psi(x,l_2)^{p}\xi(w(x,l_2))\,dx\\+\int_{l_1}^{l_2}\int_{B_r}\partial_t\psi(x,t)^p\xi(w(x,t))\,dx\,dt.
\end{split}
\end{equation}

\textbf{Estimate of $J_{h,\epsilon}$:} Following the proof of \cite[Lemma 3.1]{BGK}, we obtain
\begin{equation}\label{Jheps}
\begin{split}
\lim_{h\to 0}J_{h,\epsilon}
&=J_{\epsilon}\\
&=\int_{t_1}^{t_2}\int_{\mathbb{R}^n}\int_{\mathbb{R}^n}\mathcal{V}(u(x,y,t))\big(w(x,t)\psi(x,t)^p-w(y,t)\psi(y,t)^p\big)\zeta_{\epsilon}(t)\,dx\,dy\,dt\\
&=J^1_{\epsilon}+J^2_{\epsilon},
\end{split}
\end{equation}
where
\[
J^{1}_{\epsilon}=\int_{t_1}^{t_2}\int_{B_r}\int_{B_r}\mathcal{A}(u(x,y,t))\bigl(w(x,t)\psi(x,t)^p-w(y,t)\psi(y,t)^p\bigr)\zeta_{\epsilon}(t)\,d\mu\,dt,
\]
and
\[
J^{2}_{\epsilon}=2\int_{t_1}^{t_2}\int_{\mathbb{R}^n\setminus B_r}\int_{B_r}\mathcal{A}(u(x,y,t))w(x,t)\psi(x,t)^p\zeta_{\epsilon}(t)\,d\mu\,dt.
\]
\textbf{Estimate of $J^{1}_{\epsilon}$:} To estimate the integral $J^{1}_{\epsilon}$, we use some ideas  from the  proof of \cite[Theorem 1.4]{Kuusilocal}. By symmetry we may assume $u(x,t)<u(y,t)$. In this case, for every fixed $t$, we observe that
\begin{align*}
&|u(x,t)-u(y,t)|^{p-2}(u(x,t)-u(y,t))\big(w(x,t)\psi(x,t)^p-w(y,t)\psi(y,t)^p\big)\\
&=-(u(y,t)-u(x,t))^{p-1}\big(w(x,t)\psi(x,t)^p-w(y,t)\psi(y,t)^p\big)\\
&=\begin{cases}
-(w(x,t)-w(y,t))^{p-1}\big(w(x,t)\psi(x,t)^p-w(y,t)\psi(y,t)^p\big),\text{ if }u(x,t)<k,\,u(y,t)<k,\\
-(u(y,t)-u(x,t))^{p-1}w(x,t)\psi(x,t)^p,\text{ if }u(x,t)\leq k,\,u(y,t)>k,\\
0,\text{ otherwise,}
\end{cases}
\end{align*}
where we used the fact that $u(x,t)<u(y,t)$ implies $w(x,t)>w(y,t)$. Therefore in all cases we have
\begin{align*}
&|u(x,t)-u(y,t)|^{p-2}(u(x,t)-u(y,t))\big(w(x,t)\psi(x,t)^p-w(y,t)\psi(y,t)^p\big)\\
&\qquad\leq -(w(x,t)-w(y,t))^{p-1}\bigl(w(x,t)\psi(x,t)^p-w(y,t)\psi(y,t)^p\bigr).
\end{align*}
This implies
\[
J^{1}_{\epsilon}\leq -\int_{t_1}^{t_2}\int_{B_r}\int_{B_r}(w(x,t)-w(y,t))^{p-1}(w(x,t)\psi(x,t)^p-w(y,t)\psi(y,t)^p)\zeta_{\epsilon}(t)\,d\mu\, dt.
\]
Let us first consider the case when $\psi(x,t)\leq\psi(y,t)$. 
Choosing $a=\psi(y,t)$ and $b=\psi(x,t)$ in Lemma \ref{epsilon} we obtain
\begin{equation}\label{epsilonequationinequality}
\psi(x,t)^p\geq (1-C(p)\gamma)\psi(y,t)^p-(1+C(p)\gamma)\gamma^{1-p}|\psi(x,t)-\psi(y,t)|^p
\end{equation}
for any $\gamma\in(0,1]$, where $C(p)=(p-1)\Gamma(\max\{1,p-2\})$. 
By letting
$$
\gamma=\frac{1}{\max\{1,2C(p)\}}\frac{w(x,t)-w(y,t)}{w(x,t)}\in(0,1],
$$
we deduce from \eqref{epsilonequationinequality} that there exists a positive constant $C=C(p)$ such that
\begin{align*}
(w(x,t)&-w(y,t))^{p-1}w(x,t)\psi(x,t)^p\\
&\geq (w(x,t)-w(y,t))^{p-1}w(x,t)\max\{\psi(x,t),\psi(y,t)\}^p\\
&\qquad-\frac{1}{2}(w(x,t)-w(y,t))^p\max\{\psi(x,t),\psi(y,t)\}^p\\
&\qquad-C\max\{w(x,t),w(y,t)\}^p|\psi(x,t)-\psi(y,t)|^p.
\end{align*}
Here we used the fact that under the assumptions $\psi(x,t)\leq\psi(y,t)$ and $w(x,t)>w(y,t)$ we have $\max\{\psi(x,t),\psi(y,t)\}=\psi(y,t)$ and $\max\{w(x,t),w(y,t)\}=w(x,t)$. In the other cases $w(x,t)> w(y,t)$, $\psi(x,t)\geq\psi(y,t)$ or $w(x,t)=w(y,t)$, the above estimate is clear. Therefore, when $w(x,t)\geq w(y,t)$, we have
\begin{equation}\label{subballpartestimate1}
\begin{split}
(w(x,t)&-w(y,t))^{p-1}(w(x,t)\psi(x,t)^p-w(y,t)\psi(y,t)^p)\\
&\geq (w(x,t)-w(y,t))^{p-1}(w(x,t)\max\{\psi(x,t),\psi(y,t)\}^p-w(y,t)\psi(y,t)^p)\\
&\qquad-\frac{1}{2}(w(x,t)-w(y,t))^p\max\{\psi(x,t),\psi(y,t)\}^p\\
&\qquad-C\max\{w(x,t),w(y,t)\}^p|\psi(x,t)-\psi(y,t)|^p\\
&\geq \frac{1}{2}(w(x,t)-w(y,t))^p\max\{\psi(x,t),\psi(y,t)\}^p\\
&\qquad-C\max\{w(x,t),w(y,t)\}^p|\psi(x,t)-\psi(y,t)|^p,
\end{split}
\end{equation}
where $C=C(p)$.
If $w(x,t)<w(y,t)$, we may interchange the roles of $x$ and $y$ above in order to obtain \eqref{subballpartestimate1}. 
We observe that 
\begin{equation}\label{subballpartestimate2}
\begin{gathered}
|w(x,t)\psi(x,t)-w(y,t)\psi(y,t)|^p\leq 2^{p-1}|w(x,t)-w(y,t)|^{p}\max\{\psi(x,t),\psi(y,t)\}^p\\
+2^{p-1}\max\{w(x,t),w(y,t)\}^p|\psi(x,t)-\psi(y,t)|^p.
\end{gathered}
\end{equation}
Thus from \eqref{subballpartestimate1} and \eqref{subballpartestimate2}, we deduce that
\begin{equation}\label{subballpartestimate3}
\begin{split}
J_{\epsilon}^{1}&\leq -c\int_{t_1}^{t_2}\int_{B_r}\int_{B_r}|w(x,t)\psi(x,t)-w(y,t)\psi(y,t)|^p\zeta_{\epsilon}(t)\,d\mu\, dt\\
&\qquad+C\int_{t_1}^{t_2}\int_{B_r}\int_{B_r}\max\{w(x,t),w(y,t)\}^p|\psi(x,t)-\psi(y,t)|^p\zeta_{\epsilon}(t)\,d\mu\, dt,
\end{split}
\end{equation}
for some positive constants $c=c(p)$ and  $C=C(p)$.\\
\textbf{Estimate of $J_{\epsilon}^{2}$:} To estimate $J_{\epsilon}^{2}$, let $x\in B_r$ and $y\in \mathbb{R}^n\setminus B_r$. 
If $u(x,t)\leq u(y,t)$, then $J_{\epsilon}^2\leq 0$. 
It remains to consider the case when $u(x,t)>u(y,t)$. 
If $u(y,t)<u(x,t)<k$, then 
\begin{align*}
|u(x,t)-u(y,t)|^{p-2}(u(x,t)-u(y,t))w(x,t)
&=(u(x,t)-u(y,t))^{p-1}w(x,t)\\
&\leq w(y,t)^{p-1}w(x,t).
\end{align*}
This implies
\begin{equation}\label{subballpartestimate4}
\begin{split}
J_{\epsilon}^{2}&\leq \int_{t_1}^{t_2}\int_{\mathbb{R}^n\setminus B_r}\int_{B_r}K(x,y,t)w(y,t)^{p-1}w(x,t)\psi(x,t)^p\zeta_{\epsilon}(t)\,dx\,dy\,dt\\
&\leq\Lambda\esssup_{(x,t)\in\mathrm{supp}\,\psi,\,{t_1<t<t_2}}\int_{\mathbb{R}^n\setminus B_r}\frac{w(y,t)^{p-1}}{|x-y|^{n+ps}}\,dy
\int_{t_1}^{t_2}\int_{B_r}w(x,t)\psi(x,t)^p\zeta_{\epsilon}(t)\,dx\,dt.
\end{split}
\end{equation}
From \eqref{subballpartestimate3} and \eqref{subballpartestimate4}, we conclude that there exist positive constants $c=c(p)$ and $C=C(p)$, such that 
\begin{equation}\label{jlimit}
\begin{split}
&\lim_{\epsilon\to 0}\lim_{h\to 0}J_{h,\epsilon}
=\lim_{\epsilon\to 0}J_{\epsilon}
=\lim_{\epsilon\to 0}(J_{\epsilon}^{1}+J_{\epsilon}^{2})\\
&\leq -c\int_{l_1}^{l_2}\int_{B_r}\int_{B_r}|w(x,t)\psi(x,t)-w(y,t)\psi(y,t)|^p\,d\mu\, dt\\
&\qquad+C\int_{l_1}^{l_2}\int_{B_r}\int_{B_r}\max\{w(x,t),w(y,t)\}^p|\psi(x,t)-\psi(y,t)|^p\,d\mu\,dt\\
&\qquad+\Lambda\esssup_{(x,t)\in\mathrm{supp}\,\psi,\,{t_1<t<t_2}}\int_{\mathbb{R}^n\setminus B_r}\frac{w(y,t)^{p-1}}{|x-y|^{n+ps}}\,dy
\int_{l_1}^{l_2}\int_{B_r}w(x,t)\psi(x,t)^p\,dx\,dt.
\end{split}
\end{equation}
By inserting \eqref{Iheps3} and \eqref{jlimit} into \eqref{mollified} and first by letting $l_1\to t_1$ and then by letting $l_2\to t_2$, we get
\begin{equation}\label{seminorm1}
\begin{split}
&\int_{t_1}^{t_2}\int_{B_r}\int_{B_r}|w(x,t)\psi(x,t)-w(y,t)\psi(y,t)|^p\,d\mu\,dt\\
&\leq C(p,\Lambda)\Bigg(\int_{t_1}^{t_2}\int_{B_r}\int_{B_r}{\max\{w(x,t),w(y,t)\}^p|\psi(x,t)-\psi(y,t)|^p}\,d\mu\,dt\\
&\qquad+\esssup_{(x,t)\in\mathrm{supp}\,\psi,\,t_1<t<t_2}\int_{{\mathbb{R}^n\setminus B_r}}{\frac{w(y,t)^{p-1}}{|x-y|^{n+ps}}}\,dy
\int_{t_1}^{t_2}\int_{B_r}w(x,t)\psi(x,t)^p\,dx\,dt\\
&\qquad+\int_{t_1}^{t_2}\int_{B_r}\xi(w(x,t))\partial_t\psi(x,t)^p\,dx\,dt+\int_{B_r\times\{t_1\}}\psi(x,t)^p\xi(w(x,t))\,dx\Bigg).
\end{split}
\end{equation}
Again using \eqref{Iheps3} and \eqref{jlimit} and first  by letting $l_1\to t_1$ and then by choosing $l_2 \in (t_1, t_2)$ such that
\[
\int_{B_r}\xi(w(x,l_2))\psi(x,l_2)^p\,dx \geq \frac{1}{2} \esssup_{t_1 < t< t_2} \int_{B_r}\xi(w(x,t))\psi(x,t)^p\,dx,
\]
we obtain
\begin{equation}\label{parabolicnorm1}
\begin{split}
&\esssup_{t_1 < t< t_2} \int_{B_r}\xi(w(x,t))\psi(x,t)^p\,dx\\
&\leq C(p,\Lambda)\Bigg(\int_{t_1}^{t_2}\int_{B_r}\int_{B_r}{\max\{w(x,t),w(y,t)\}^p|\psi(x,t)-\psi(y,t)|^p}\,d\mu\,dt\\
&\qquad+\esssup_{(x,t)\in\mathrm{supp}\,\psi,\,t_1<t<t_2}\int_{{\mathbb{R}^n\setminus B_r}}{\frac{w(y,t)^{p-1}}{|x-y|^{n+ps}}}\,dy
\int_{t_1}^{t_2}\int_{B_r}w(x,t)\psi(x,t)^p\,dx\,dt\\
&\qquad+\int_{t_1}^{t_2}\int_{B_r}\xi(w(x,t))\partial_t\psi(x,t)^p\,dx\,dt+\int_{B_r\times\{t_1\}}\psi(x,t)^p\xi(w(x,t))\,dx\Bigg).
\end{split}
\end{equation}
The required estimate follows from \eqref{seminorm1} and \eqref{parabolicnorm1}.
\end{proof}

\section{Proof of the main results}

From \cite[Lemma 2.2]{Verenacontinuity}, we have the following bounds for the auxiliary function $\xi$ defined in \eqref{xi}.
\begin{Lemma}\label{Auxfnlemma}
Let $q>0$. Then there exists a constant $\lambda=\lambda(q)>0$ such that
$$
\frac{1}{\lambda}(|a|+|k|)^{q-1}(a-k)_{-}^2\leq \xi((a-k)_{-})\leq\lambda(|a|+|k|)^{q-1}(a-k)_{-}^2
$$
for every $a,k\in\mathbb{R}$.
\end{Lemma}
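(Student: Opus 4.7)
My plan is to reduce the comparison to a scale-invariant integral via a change of variables, and then split into cases according to whether $q\ge 1$ or $0<q<1$.

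\emph{Reduction.} If $a\ge k$ then $(a-k)_-=0$ and the integrand $|\eta|^{q-1}(\eta-k)_-$ vanishes identically on $[k,a]$, so both sides of the inequality are zero. I may therefore assume $a<k$ and write $w:=k-a=(a-k)_->0$. On $[a,k]$ one has $(\eta-k)_-=k-\eta$, and the substitution $\eta=a+tw$, $t\in[0,1]$, yields
\[
\xi((a-k)_-)\;=\;q\,w^2\int_0^1(1-t)\,\bigl|a+t(k-a)\bigr|^{q-1}\,dt.
\]
Since $w^2=(a-k)_-^2$, the lemma is equivalent to showing
\[
c(q)(|a|+|k|)^{q-1}\;\le\; I(a,k)\;\le\; C(q)(|a|+|k|)^{q-1},
\]
where $I(a,k):=\int_0^1(1-t)\,|a+t(k-a)|^{q-1}\,dt$.

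\emph{Case $q\ge 1$.} The upper bound is immediate from $|a+t(k-a)|\le|a|+|k|$ and $\int_0^1(1-t)\,dt=\tfrac12$. For the lower bound I note that at least one of $|a|,|k|$ is $\ge(|a|+|k|)/2$; by linearity and continuity of $t\mapsto a+t(k-a)$ there is a subinterval $J\subset[0,1]$ of length bounded below by a universal constant on which simultaneously $1-t\ge c_0>0$ and $|a+t(k-a)|\ge c_0(|a|+|k|)$. A short check of the three sign configurations $a,k\ge 0$, $a\le 0\le k$, $a,k\le 0$ supplies such a $J$ explicitly (e.g.\ $J=[\tfrac12,\tfrac34]$ or $J=[\tfrac14,\tfrac12]$ depending on which endpoint is larger in absolute value). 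Integrating over $J$ gives the lower bound.

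\emph{Case $0<q<1$.} Here $x\mapsto x^{q-1}$ is decreasing on $(0,\infty)$, so the pointwise estimate $|a+t(k-a)|\le|a|+|k|$ yields $|a+t(k-a)|^{q-1}\ge(|a|+|k|)^{q-1}$ and the lower bound on $I$ is immediate. The upper bound is the delicate step, since the integrand blows up at the interior zero $t_\ast=-a/(k-a)\in(0,1)$ whenever $a\le 0\le k$; integrability is ensured by $q>0$. To obtain the correct scaling I integrate by parts, using that $|\eta|^{q-1}\eta$ (in the sense of \eqref{power}) is an antiderivative of $q|\eta|^{q-1}$ and that $|\eta|^{q+1}/(q+1)$ is an antiderivative of $|\eta|^{q-1}\eta$, to arrive at the closed form
\[
\xi((a-k)_-)\;=\;\frac{|k|^{q+1}-|a|^{q+1}}{q+1}-(k-a)\,|a|^{q-1}a.
\]
Both sides of the target inequality are positively homogeneous of degree $q+1$ in $(a,k)$, so after normalizing $|a|+|k|=1$ the estimate reduces to a comparison of two continuous nonnegative functions on the compact arc $\{(a,k):a<k,\ |a|+|k|=1\}$ that vanish simultaneously only at $a=k=\tfrac12$, where a Taylor expansion of the closed form shows both vanish to second order in $k-a$ with positive leading coefficient depending only on $q$. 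Compactness then produces a uniform upper bound and resolves the main obstacle.
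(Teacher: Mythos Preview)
The paper does not prove this lemma; it simply quotes it from \cite[Lemma~2.2]{Verenacontinuity}. Your argument is therefore a self-contained substitute, and its overall structure is sound: the change of variables reducing to the scale-invariant integral $I(a,k)$ is correct, and the case split according to the sign of $q-1$ is natural. For the ``easy'' directions (upper bound when $q\ge 1$, lower bound when $0<q<1$) the monotonicity of $x\mapsto x^{q-1}$ gives exactly what you claim. The closed form obtained by integration by parts is correct, and the homogeneity-plus-compactness argument for the upper bound in the case $0<q<1$ is a clean way to handle the singular integrand.

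Two points deserve tightening. First, in the $q\ge 1$ lower bound your suggested subintervals $J=[\tfrac12,\tfrac34]$ or $[\tfrac14,\tfrac12]$ do not always work: for instance when $a=-1$, $k=1$ the linear map $t\mapsto (1-t)a+tk=2t-1$ vanishes at $t=\tfrac12$, so $[\tfrac12,\tfrac34]$ fails to give a uniform lower bound. The claim that \emph{some} subinterval with the required properties exists is nevertheless true; a clean way to see it is to normalize $|a|+|k|=1$ and observe that $I(a,k)$ is then a continuous, strictly positive function on the compact arc $\{a\le k,\ |a|+|k|=1\}$ (positivity at the endpoints $a=k=\pm\tfrac12$ is immediate since the integrand is then constant). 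Second, in your compactness argument for $0<q<1$ you mention only the endpoint $a=k=\tfrac12$, but the closed arc has a second endpoint $a=k=-\tfrac12$, which requires the same Taylor analysis (and yields the same limiting ratio by symmetry). With these two adjustments the proof is complete.
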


For the following real analysis lemma, see \cite[Lemma 4.1]{Dibe}.

\begin{Lemma}\label{Deg2}
Let $(Y_j)_{j=0}^{\infty}$ be a sequence of positive real numbers such that for some constants $c_0>0$, $b>1$ and $\beta>0$ we have
\[
Y_0\leq c_{0}^{-\frac{1}{\beta}}b^{-\frac{1}{\beta^2}}
\quad{and}\quad
Y_{j+1}\leq c_0 b^{j} Y_j^{1+\beta}
\quad\text{for every $j=0,1,2,\dots$}.
\] 
Then $Y_j\to 0$ as $j\to\infty$.
\end{Lemma}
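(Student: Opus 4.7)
The plan is to prove by induction on $j$ the explicit geometric decay bound
\[
Y_j \leq Y_0\, b^{-j/\beta}
\quad\text{for every } j=0,1,2,\dots,
\]
from which the conclusion is immediate since $b>1$ forces $b^{-j/\beta}\to 0$. The base case $j=0$ is trivial. So the entire argument reduces to closing the inductive step, which is pure algebra on the exponents.

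For the inductive step, suppose the bound holds at index $j$. Then the recursion $Y_{j+1}\leq c_0 b^j Y_j^{1+\beta}$ combined with the inductive hypothesis gives
\[
Y_{j+1} \leq c_0\, b^j\, \bigl(Y_0\, b^{-j/\beta}\bigr)^{1+\beta}
       = c_0\, Y_0^{\,1+\beta}\, b^{j - j(1+\beta)/\beta}
       = c_0\, Y_0^{\,1+\beta}\, b^{-j/\beta},
\]
after collecting the exponent as $j\bigl(1-(1+\beta)/\beta\bigr) = -j/\beta$. It remains to show that the right-hand side is at most $Y_0\, b^{-(j+1)/\beta}$, which is equivalent to $c_0\, Y_0^{\,\beta} \leq b^{-1/\beta}$, i.e.\ $Y_0 \leq c_0^{-1/\beta}\, b^{-1/\beta^2}$. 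But this is precisely the smallness hypothesis imposed on $Y_0$, so the induction closes.

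There is no real obstacle here; the only thing to be careful about is the bookkeeping of exponents (in particular the cancellation $j - j(1+\beta)/\beta = -j/\beta$) and verifying that the numerical condition required to close the induction matches the given starting condition exactly. The lemma is best viewed as a sharp calibration: the threshold $c_0^{-1/\beta} b^{-1/\beta^2}$ is chosen precisely so that the geometric factor $b^{-j/\beta}$ is self-reproducing under the nonlinear recursion.
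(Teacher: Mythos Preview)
Your proof is correct; the induction closes exactly as you say, and the exponent bookkeeping is right. The paper does not supply its own proof of this lemma but simply cites \cite[Lemma~4.1]{Dibe}, and your argument is precisely the standard induction one finds there.
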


We now prove a De Giorgi type lemma from which it follows that a weak supersolution satisfies the property $(\mathcal D)$ in Definition \ref{propertyP}.

\begin{Lemma}\label{Lemma1}
Let $1<p<\infty,\,q,\,L,\,\theta>0$, $a\in(0,1)$ and assume that $u$ is a weak supersolution of \eqref{maineqn} in $\Omega\times(0,T)$. 
Assume that 
\[
\mu_{-}\leq\essinf_{\mathbb{R}^n\times(0,T)}\,u
\] 
and let $\mathcal{Q}^{-}(x_0,t_0;r,\theta)\Subset\Omega\times(0,T)$. Then there exists a constant $\tau\in(0,1)$, depending only on $a,L,\mu_{-},\theta$ and the data, such that if 
$$
|\{u\leq\mu_{-}+L\}\cap \mathcal{Q}^{-}(x_0,t_0;r,\theta)|\leq\tau|\mathcal{Q}^{-}(x_0,t_0;r,\theta)|,
$$
then
$$
u\geq\mu_{-}+aL\text{ a.e. in } \mathcal{Q}^{-}\bigl(x_0,t_0;\frac{3r}{4},\theta\bigr).
$$
\end{Lemma}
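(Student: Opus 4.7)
The plan is to run a De Giorgi iteration driven by the Caccioppoli inequality of Lemma~\ref{Subenergyestimate}, combined with the parabolic Sobolev embedding of Lemma~\ref{parabolic embedding} and the iteration lemma~\ref{Deg2}. Set
$$
r_j := \frac{3r}{4} + \frac{r}{4\cdot 2^{j}}, \qquad \tilde r_j := \tfrac12(r_j + r_{j+1}), \qquad k_j := \mu_- + aL + \frac{(1-a)L}{2^j},
$$
let $\mathcal{Q}_j := \mathcal{Q}^-(x_0, t_0; r_j, \theta)$ and $\widetilde{\mathcal{Q}}_j := \mathcal{Q}^-(x_0, t_0; \tilde r_j, \theta)$, and write $w_j := (u-k_j)_-$, $A_j := \{w_j > 0\} \cap \mathcal{Q}_j$. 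I would take piecewise smooth cutoffs $\psi_j$ with $0 \le \psi_j \le 1$, identically $1$ on $\mathcal{Q}_{j+1}$, supported in $\widetilde{\mathcal{Q}}_j$, vanishing on the initial time slice $B_{r_j}\times\{t_0-\theta r_j^{ps}\}$, and satisfying $|\nabla \psi_j| \lesssim 2^j/r$ and $|\partial_t \psi_j^p| \lesssim 2^{psj}/(\theta r^{ps})$.

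Applying Lemma~\ref{Subenergyestimate} with $k=k_j$ on the cylinder $\mathcal{Q}_j$ with cutoff $\psi_j$, I would bound each of the four terms on the right hand side by $C_L\, b^{j}\, r^{-ps}\, |A_j|$, where $C_L$ depends only on $a, L, \mu_-, \theta, \|u\|_{L^\infty}, p, q$ and the data, and $b>1$ depends only on $p,s$. The gradient-of-$\psi$ term uses the pointwise bound $w_j \le L$ (a direct consequence of $u\ge\mu_-$ in $\mathbb{R}^n\times(0,T)$) together with the standard seminorm estimate $\sup_x \int_{B_{r_j}}|\psi_j(x)-\psi_j(y)|^p|x-y|^{-n-ps}\,dy \lesssim (2^j/r)^{ps}$. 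The nonlocal tail is controlled by the positive geometric distance $r_j-\tilde r_j \ge r/2^{j+4}$ between the spatial support of $\psi_j$ and the complement of $B_{r_j}$, so that $\int_{\mathbb{R}^n\setminus B_{r_j}} w_j(y,t)^{p-1}|x-y|^{-n-ps}\,dy \lesssim L^{p-1}(2^j/r)^{ps}$; multiplied by $\int w_j \psi_j^p \le L|A_j|$, this yields the same form of bound. The $\partial_t\psi_j^p$ term is handled via Lemma~\ref{Auxfnlemma}: combined with the global bound $|u|\le\|u\|_{L^\infty}$ from Definition~\ref{subsupsolution}, it gives $\xi(w_j) \le CL^2$ up to constants depending on $q,L,\mu_-,\|u\|_{L^\infty}$. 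The initial-time term vanishes by our choice of $\psi_j$.

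Next I would turn the energy estimate into an iteration by applying the fractional Sobolev inequality of Lemma~\ref{ellipticembedding2} to $v_j := w_j \psi_j$ in space and interpolating in time via Lemma~\ref{parabolic embedding}, using Lemma~\ref{Auxfnlemma} once more to convert the supremum of $\xi(w_j)$ into one of $w_j^\sigma$ with $\sigma=2$ when $q\ge 1$ and $\sigma=q+1$ when $q<1$; the former uses the global $L^\infty$-bound to estimate $(|u|+|k_j|)^{q-1}$ from above, while the latter uses $|u|+|k_j|\ge w_j$ on $\{u<k_j\}$. This yields an $L^l$ bound for $v_j$ with $l=p(1+\sigma s/n)>p$. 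Since $v_j \ge k_j-k_{j+1} = (1-a)L/2^{j+1}$ on $A_{j+1}\cap\mathcal{Q}_{j+2}$, Chebyshev's inequality, rewritten in terms of $Y_j := |A_j|/|\mathcal{Q}_j|$, produces a recursion of the form $Y_{j+1} \le C\, c^{j}\, Y_j^{1+\beta}$, with $\beta=\sigma s/n>0$ and $C,c$ depending only on the data and on $a, L, \mu_-, \theta, \|u\|_{L^\infty}$. Lemma~\ref{Deg2} then furnishes the required $\tau\in(0,1)$: whenever $Y_0\le\tau$, $Y_j\to 0$, and hence $u\ge\mu_-+aL$ a.e.\ in $\mathcal{Q}^-(x_0,t_0;3r/4,\theta)$.

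The main technical obstacle is the treatment of the nonlocal tail in the Caccioppoli inequality. Unlike in the local parabolic case, the tail couples the cutoff geometry to the behavior of $u$ outside the localization cylinder, and it is precisely the global assumption $u\ge\mu_-$ on $\mathbb{R}^n\times(0,T)$ (together with the global $L^\infty$-bound from Definition~\ref{subsupsolution}) that produces the uniform estimate $w_j\le L$ and only a polynomial factor $2^{psj}$, compatible with the geometric factor $c^j$ coming from the cutoffs and leaving room for the superlinear recursion to close. A secondary but essential point is that the threshold $\tau$ inherits explicit dependence on $L,\mu_-,\theta$ and $\|u\|_{L^\infty}$ through Lemma~\ref{Auxfnlemma} (because $\xi$ is only equivalent to $w^2$ or $w^{q+1}$ modulo constants depending on the $L^\infty$-norm), which accounts for the dependencies allowed in the statement.
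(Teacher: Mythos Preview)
Your overall strategy---De Giorgi iteration based on Lemma~\ref{Subenergyestimate}, the fractional Sobolev inequality of Lemma~\ref{ellipticembedding2}, the parabolic embedding of Lemma~\ref{parabolic embedding}, and the recursion lemma~\ref{Deg2}---is exactly the paper's approach, and your handling of the three right-hand side terms of the energy estimate (the $|\psi_j(x)-\psi_j(y)|^p$ term, the nonlocal tail, and the $\partial_t\psi_j^p$ term) is correct in outline.

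There is, however, a genuine slip in the step where you convert $\esssup_t\int\psi_j^p\,\xi(w_j)\,dx$ into $\esssup_t\int w_j^\sigma\,dx$. What you need here is the \emph{lower} bound $\xi(w_j)\gtrsim w_j^\sigma$, so that the Caccioppoli control on $\int\xi(w_j)$ transfers to $\int w_j^\sigma$. Your stated arguments produce the opposite inequality: for $q\ge1$, an $L^\infty$-bound on $|u|+|k_j|$ gives an \emph{upper} bound on $(|u|+|k_j|)^{q-1}$ and hence only $\xi(w_j)\lesssim w_j^2$; for $q<1$, the inequality $|u|+|k_j|\ge w_j$ gives $(|u|+|k_j|)^{q-1}\le w_j^{q-1}$ and hence only $\xi(w_j)\lesssim w_j^{q+1}$. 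The fix is simply to swap the two cases: for $q\ge1$ use $|u|+|k_j|\ge w_j$ to get $\xi(w_j)\gtrsim w_j^{q+1}$ (so $\sigma=q+1$), and for $q<1$ use the upper bound $|u|+|k_j|\le 2M$ on $\{u<k_j\}$, with $M=\max\{|\mu_-|,|\mu_-+L|\}$, to get $\xi(w_j)\gtrsim M^{q-1}w_j^2$ (so $\sigma=2$). Note also that this upper bound $|u|+|k_j|\le 2M$ follows already from $\mu_-\le u<k_j\le\mu_-+L$ on $\{u<k_j\}$, so the global $\|u\|_{L^\infty}$ is never needed, and $\tau$ can be made to depend only on $a,L,\mu_-,\theta$ and the structural data, as the statement asserts.

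For comparison, the paper organizes this step a little differently: it introduces intermediate levels $\bar k_j=\tfrac12(k_j+k_{j+1})$ and the truncations $\bar w_j=(\bar k_j-u)_+$, and observes that on $\{u\le\bar k_j\}$ one has the two-sided bound $(1-a)2^{-j-2}L\le|u|+|k_j|\le 2M$. This yields $(|u|+|k_j|)^{q-1}w_j^2\ge c(a,q)\,\min\{M^{q-1},L^{q-1}\}\,2^{-q(j+3)}\bar w_j^2$ uniformly for all $q>0$, so one can take $\sigma=2$ in every case at the price of an extra geometric factor $2^{-qj}$ absorbed into the iteration base $b$. Either route (your corrected case split or the paper's intermediate levels) closes the iteration; the paper's version has the advantage of a unified exponent and no appeal to $\|u\|_{L^\infty}$.
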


\begin{proof} First we assume that $sp< n$. 
Without loss of generality, we may assume that $(x_0,t_0)=(0,0)$. 
For $j=0,1,2,\dots$, let
\begin{equation}\label{parameter1}
k_j=\mu_{-}+aL+\frac{(1-a)L}{2^j},
\quad
\bar{k}_j=\frac{k_j+k_{j+1}}{2},
\quad
r_j=\frac{3r}{4}+\frac{r}{2^{j+2}},
\quad
\bar{r}_j=\frac{r_j+r_{j+1}}{2},
\end{equation}
\begin{equation}\label{parameter2}
B_j=B_{r_j}(0),
\quad
\overline{B}_j=B_{\bar{r}_j}(0),
\quad
w_j=(k_j-u)_{+},
\quad
\bar{w}_j=(\bar{k}_j-u)_{+},
\end{equation}
and
\begin{equation}\label{parameter3}
\mathcal{Q}_j=B_j\times(-\theta r_j^{ps},0],
\quad
\overline{\mathcal{Q}}_j=\overline{B}_{j}\times(-\theta \bar{r}_{j}^{ps},0].
\end{equation}
We observe that $\mathcal{Q}_{j+1}\subset\overline{\mathcal{Q}}_j\subset \mathcal{Q}_j$, $\bar{k}_j<k_j$ and hence $\bar{w}_j\leq w_j$. 
Moreover, from Lemma \ref{Auxfnlemma}, we have
\begin{equation}\label{Auxfnlemma1}
\frac{1}{\lambda(q)}(|u|+|k_j|)^{q-1}w_j^{2}\leq\xi(w_j)\leq\lambda(q)(|u|+|k_j|)^{q-1}w_j^{2}.
\end{equation}
Let  $\psi_{1,j}\in C^{\infty}(B_j)$ and $\eta_{1,j}\in C^{\infty}((-\theta r_j^{ps},0])$ be  nonnegative  functions satisfying
\[
0 \leq \psi_{1,j} \leq 1,\quad
\psi_{1,j}=1\text{ in $B_{j+1}$},\quad
\mathrm{dist}(\mathrm{supp}\,\psi_{1,j},\,\mathbb{R}^n\setminus B_j)\geq 2^{-j-1}r
\]
and $0 \leq \eta_{1,j} \leq1$, $\eta_{1,j} =1$ on $(-\theta \bar{r}_j^{ps}, 0]$.
Then  for $\psi_j=\psi_{1,j}\,\eta_{1,j}$ we have
$$
0\leq\psi_j\leq 1\text{ in }\mathcal{Q}_j,\quad
\psi_j=1\text{ in }\overline{\mathcal{Q}}_j,\quad
\psi_j=0\text{ on }\partial_p \mathcal{Q}_{j},
$$
and
$$
|\nabla \psi_j|\leq C\frac{2^j}{r},
\quad
|\partial_t \psi_j|\leq C\frac{2^{jps}}{\theta r^{ps}},
$$
for some constant $C=C(n,p,s)$.

By Lemma \ref{Subenergyestimate} applied to $w_j$, there exists a constant $C=C(p,\Lambda)$, such that
\begin{equation}\label{energyapp1}
\begin{split}
&\int_{-\theta{r_j}^{ps}}^{0}\int_{B_{j}}\int_{B_{j}}|w_j(x,t)\psi_{j}(x,t)-w_j(y,t)\psi_{j}(y,t)|^p \,d\mu\,dt\\
&\qquad+\esssup_{-\theta{r_j}^{ps}<t<0}\int_{B_{j}}\psi_{j}(x,t)^p\xi(w_j(x,t))\,dx\\&\leq C\Bigg(\int_{-\theta{r_j}^{ps}}^{0}\int_{B_{j}}\int_{B_{j}}{\max\{w_j(x,t),w_j(y,t)\}^p|\psi_{j}(x,t)-\psi_{j}(y,t)|^p}\,d\mu\,dt\\
&\qquad+\esssup_{(x,t)\in\mathrm{supp}\,\psi_j,\,-\theta r_j^{ps}<t<0}\int_{{\mathbb{R}^n\setminus B_{j}}}{\frac{w_j(y,t)^{p-1}}{|x-y|^{n+ps}}}\,dy
\int_{-\theta r_j^{ps}}^{0}\int_{B_{j}}w_j(x,t)\psi_j(x,t)^p\,dx\,dt\\
&\qquad+\int_{-\theta r_j^{ps}}^{0}\int_{B_{j}}\xi(w_j(x,t))\partial_t\psi_j(x,t)^p\,dx\,dt\Bigg).
\end{split}
\end{equation}
Now using \eqref{Auxfnlemma1}, the fact that $\bar{w}_j\leq w_j$ along with the properties of $\psi_j$ in \eqref{energyapp1}, we obtain
\begin{equation}\label{energyapp2}
\begin{split}
&\int_{-\theta{\bar{r}_j}^{ps}}^{0}\int_{\overline{B}_{j}}\int_{\overline{B}_{j}}|\bar{w}_j(x,t)-\bar{w}_j(y,t)|^p \,d\mu\,dt+\esssup_{-\theta{\bar{r}_j}^{ps}<t<0}\int_{\overline{B}_{_j}}(|u|+|k_j|)^{q-1}\bar{w}_j^2\,dx\\&\leq
C(n,p,q,s,\Lambda)(I_1+I_2+I_3),
\end{split}
\end{equation}
where
\begin{align*}
I_1&=\int_{-\theta{r_j}^{ps}}^{0}\int_{B_{j}}\int_{B_{j}}{\max\{w_j(x,t),w_j(y,t)\}^p|\psi_{j}(x,t)-\psi_{j}(y,t)|^p}\,d\mu\,dt,\\
I_2&=\esssup_{(x,t)\in\mathrm{supp}\,\psi_j,\,-\theta r_j^{ps}<t<0}\int_{{\mathbb{R}^n\setminus B_{j}}}{\frac{w_j(y,t)^{p-1}}{|x-y|^{n+ps}}}\,dy
\int_{-\theta r_j^{ps}}^{0}\int_{B_{j}}w_j(x,t)\psi_j(x,t)^p\,dx\,dt,\text{ and }\\
I_3&=\int_{-\theta r_j^{ps}}^{0}\int_{B_{j}}(|u|+|k_j|)^{q-1}{w}_j^2|\partial_t\psi_j(x,t)^p|\,dx\,dt.
\end{align*}
Let $A_j=\{u<k_j\}\cap \mathcal{Q}_j$. 
We estimate the terms $I_1$ , $I_2$ and $I_3$ separately.\\
\textbf{Estimate of $I_1$:} Using the facts that $\frac{r}{2}<r_j<r$, $w_j\leq L$, and the derivative bounds for $\psi_j$, we obtain
\begin{equation}\label{estI1}
\begin{split}
I_1&=\int_{-\theta{r_j}^{ps}}^{0}\int_{B_{j}}\int_{B_{j}}{\max\{w_j(x,t),w_j(y,t)\}^p|\psi_{j}(x,t)-\psi_{j}(y,t)|^p}\,d\mu\,dt\\
&\leq C\frac{2^{jp}}{r^{ps}}L^p|A_j|,
\end{split}
\end{equation}
where $C=C(n,p,s,\Lambda)$.\\
\textbf{Estimate of $I_2$:} For every $x\in\mathrm{supp}\,\psi_{1,j}$ and every $y\in\mathbb{R}^n\setminus B_j$, we have
\begin{equation}\label{I2}
\frac{1}{|x-y|}=\frac{1}{|y|}\frac{|x-(x-y)|}{|x-y|}\leq\frac{1}{|y|}(1+2^{j+3})\leq\frac{2^{j+4}}{|y|}.
\end{equation}
Using $r_j>\frac{r}{2}$, $0\leq\psi_j\leq 1$ along with the fact that 
$$
\mu_{-}\leq \essinf_{\mathbb{R}^n\times(0,T)}\,u,
$$
which gives $w_j\leq L$ in $\mathbb{R}^n\times(0,T)$, we obtain
\begin{equation}\label{estI2}
\begin{split}
I_2&\leq\esssup_{x\in\mathrm{supp}\,\psi_{1,j},\,-\theta r_j^{ps}<t<0}\int_{{\mathbb{R}^n\setminus B_{j}}}{\frac{w_j(y,t)^{p-1}}{|x-y|^{n+ps}}}\,dy
\int_{-\theta r_j^{ps}}^{0}\int_{B_{j}}w_j(x,t)\,dx\,dt\\
&\leq C\frac{2^{j(n+ps)}}{r^{ps}}L^p|A_j|,
\end{split}
\end{equation}
where $C=C(n,p,s)$.\\
\textbf{Estimate of $I_3$:}
When $0<q\leq 1$, using the inequality $|u|+|k_j|\geq w_j$ along with the fact that $w_j\leq L$, we get
\begin{equation}\label{estI3less}
\begin{split}
I_3&=\int_{-\theta r_j^{ps}}^{0}\int_{B_j}(|u|+|k_j|)^{q-1}{w}_j^{2}|\partial_t\psi_j(x,t)^p|\,dx\,dt\\
&\leq C\frac{2^{jps}}{\theta r^{ps}}\int_{-\theta r_j^{ps}}^{0}\int_{B_j}w_j^{q+1}\,dx\, dt\\
&\leq C\frac{2^{jps}}{\theta r^{ps}} L^{q+1}|A_j|,
\end{split}
\end{equation}
where $C=C(n,p,q,s)$.
For $q\geq 1$, we have $\mu_{-}\leq u\leq k_j\leq\mu_{-}+L$ in $A_j$. Hence, we get
\begin{equation}\label{estI3grt}
I_3=\int_{-\theta r_j^{ps}}^{0}\int_{B_j}(|u|+|k_j|)^{q-1}{w}_j^{2}|\partial_t\psi_j(x,t)^p|\,dx\,dt
\leq C\frac{2^{jps}}{\theta r^{ps}}M^{q-1}L^2|A_j|,
\end{equation}
where $C=C(n,p,q,s)$ and $M=\max\{|\mu_{-}|,|\mu_{-}+L|\}$. Therefore, we have
\begin{equation}\label{estI3final}
I_3\leq C\frac{2^{jps}}{r^{ps}}\frac{\max\{M^{q-1},L^{q-1}\}L^{2}}{\theta}|A_j|,
\end{equation}
where $C=C(n,p,q,s)$.
Since $u\leq\bar{k}_j$, we have 
\[
(1-a)2^{-j-2}L=k_j-\bar{k}_j\leq k_j-u\leq |u|+|k_j|\leq 2 M.
\] 
Therefore, for any $q>0$, we get
\begin{equation}\label{new}
(|u|+|k_j|)^{q-1}{w}_j^{2}\geq\frac{\gamma\min\{M^{q-1},L^{q-1}\}}{2^{q(j+3)}}\bar{w}_j^{2},
\end{equation}
where
\begin{equation}\label{gamma}
\gamma=\gamma(a,q)=\begin{cases}
2^{q-1},&0<q<1,\\
(1-a)^{q-1},&q\geq 1.
\end{cases}
\end{equation}
By inserting \eqref{estI1}, \eqref{estI2}, \eqref{estI3final} and \eqref{new} into \eqref{energyapp2}, we obtain
\begin{equation}\label{energyapp4}
\begin{split}
\int_{-\theta{\bar{r}_j}^{ps}}^{0}&\int_{\overline{B}_{j}}\int_{\overline{B}_{j}}\frac{|\bar{w}_j(x,t)-\bar{w}_j(y,t)|^p}{|x-y|^{n+ps}} \,dx\,dy\,dt\\
&\qquad+\frac{\gamma\min\{M^{q-1},L^{q-1}\}}{2^{q(j+3)}}\esssup_{-\theta{\bar{r}_j}^{ps}<t<0}\int_{\overline{B}_j}\bar{w}_j^2\,dx\\
&\leq C\frac{2^{j(n+ps+p)}}{r^{ps}}L^p\Big(1+\frac{\max\{M^{q-1},L^{q-1}\}L^{2-p}}{\theta}\Big)|A_j|,
\end{split}
\end{equation}
where $C=C(n,p,q,s,\Lambda)$.
By using the inequality in  Lemma \ref{parabolic embedding} with $l=p(1+\frac{2s}{n})$, we get
\begin{equation}\label{pe1}
\int_{\overline{\mathcal{Q}}_j}|\bar{w}_j|^{p(1+\frac{2s}{n})}\,dx\,dt
\leq\int_{-\theta\bar{r}_j^{ps}}^{0}\Bigl(\int_{\overline{B}_j}|\bar{w}_j|^{p\kappa^{*}}\,dx\Bigr)^\frac{1}{\kappa^{*}}\,dt
\Bigl(\esssup_{-\theta \bar{r}_j^{ps}<t<0}\int_{\overline{B}_j}|\bar{w}_j|^2\,dx\Bigr)^\frac{ps}{n}.
\end{equation}
By the Sobolev inequality in Lemma \ref{ellipticembedding2}, there exists a constant $C=C(n,p,s)$ such that
\begin{equation}\label{pe2}
\begin{split}
\Bigl(\int_{\overline{B}_j}|\bar{w}_j|^{p\kappa^{*}}\,dx\Bigr)^\frac{1}{\kappa^{*}}
\leq C\bar{r}_j^\frac{n}{\kappa^{*}}\Biggl(\bar{r}_j^{ps-n}\int_{\overline{B}_j}\int_{\overline{B}_j}\frac{|\bar{w}_j(x,t)-\bar{w}_j(y,t)|^p}{|x-y|^{n+ps}}\,dx dy+\fint_{\overline{B}_j}|\bar{w}_j|^p\,dx\Biggr).
\end{split}
\end{equation}
Noting that $\mathcal{Q}_{j+1}\subset\overline{\mathcal{Q}}_j\subset \mathcal{Q}_j$ and using \eqref{pe1} and \eqref{pe2}, we obtain
\begin{equation}\label{dist}
\begin{split}
&\frac{(1-a)L}{2^{j+2}}|A_{j+1}|=\int_{A_{j+1}}(\bar{k}_j-k_{j+1})\,dx dt
\leq\int_{\mathcal{Q}_{j+1}}\bar{w}_j\,dx dt
\leq\int_{\overline{\mathcal{Q}}_j}\bar{w}_j\,dx dt\\
&\leq\Bigl(\int_{\overline{\mathcal{Q}}_j}|\bar{w}_j|^{p(1+\frac{2s}{n})}\,dx dt\Bigr)^\frac{n}{p(n+2s)}\,|A_j|^{1-\frac{n}{p(n+2s)}}\\
&\leq C\Bigg(\bar{r}_j^\frac{n}{\kappa^{*}}\bar{r}_j^{ps-n}\int_{-\theta\bar{r}_j^{ps}}^{0}\int_{\overline{B}_j}\int_{\overline{B}_j}\frac{|\bar{w}_j(x,t)-\bar{w}_j(y,t)|^p}{|x-y|^{n+ps}}\,dx\,dy
+\int_{-\theta\bar{r}_j^{ps}}^{0}\fint_{\overline{B}_j}|\bar{w}_j|^p\,dx dt\Bigg)^\frac{n}{p(n+2s)}\\
&\qquad\cdot\Bigl(\esssup_{-\theta \bar{r}_j^{ps}<t<0}\int_{\overline{B}_j}|\bar{w}_j|^2\,dx\Bigr)^\frac{s}{n+2s}\,|A_j|^{1-\frac{n}{p(n+2s)}},
\end{split}
\end{equation}
for some constant $C=C(n,p,s)$.
By inserting \eqref{energyapp4} into \eqref{dist}, we get 
\begin{equation}\label{dist8}
\begin{split}
&\frac{(1-a)L}{2^{j+2}}|A_{j+1}|\\
&\leq C\Bigg( \bar{r}_j^\frac{n}{\kappa^{*}}\Biggl(\bar{r}_j^{ps-n}\frac{2^{j(n+ps+p)}}{r^{ps}}L^p\Big(1+\frac{\max\{M^{q-1},L^{q-1}\}L^{2-p}}{\theta}\Big)|A_j|
+\bar{r}_j^{-n}L^p |A_j|\Biggr)\Bigg)^\frac{n}{p(n+2s)}\\
&\cdot\Bigg(\frac{2^{j(n+ps+p+q)}}{r^{ps}\gamma\min\{M^{q-1},L^{q-1}\}}L^p\Big(1+\frac{\max\{M^{q-1},L^{q-1}\}L^{2-p}}{\theta}\Big)|A_j|\Bigg)^\frac{s}{n+2s}|A_j|^{1-\frac{n}{p(n+2s)}},
\end{split}
\end{equation}
for some constant $C=C(n,p,q,s,\Lambda)$.
Now since $\frac{r}{2}<\bar{r}_j<r$, we obtain from  \eqref{dist8} that 
\begin{equation}\label{dist7}
\begin{split}
|A_{j+1}|&\leq\frac{C}{(1-a)L\,r^\frac{s(n+ps)}{n+2s}}\Bigg(L^p\Big(1+\frac{\max\{M^{q-1},L^{q-1}\}L^{2-p}}{\theta}\Big)\Bigg)^\frac{n}{p(n+2s)}\\
&\qquad\cdot\Bigg(\frac{L^{p}}{\gamma\min\{M^{q-1},L^{q-1}\}}\Big(1+\frac{\max\{M^{q-1},L^{q-1}\}L^{2-p}}{\theta}\Big)\Bigg)^\frac{s}{n+2s}\\
&\qquad\cdot 2^{j\big(\frac{n(n+ps+p)}{p(n+2s)}+\frac{s(n+ps+p+q)}{n+2s}+1\big)}\,|A_j|^{1+\frac{s}{n+2s}},
\end{split}
\end{equation}
where $C=C(n,p,q,s,\Lambda)$.
By dividing both sides of \eqref{dist7} by $|\mathcal{Q}_{j+1}|$ and noting that $|\mathcal{Q}_j|<2^{ps+n}|\mathcal{Q}_{j+1}|$, and also that $r_j<r$, we obtain
\begin{equation}\label{dist4}
\begin{split}
Y_{j+1}&\leq\frac{C(n,p,q,s,\Lambda)}{(1-a)L}\Bigg(L^p\Big(1+\frac{\max\{M^{q-1},L^{q-1}\}L^{2-p}}{\theta}\Big)\Bigg)^\frac{n}{p(n+2s)}\\
&\qquad\cdot\Bigg( \frac{\theta L^{p}}{\gamma\min\{M^{q-1},L^{q-1}\}}\Big(1+\frac{\max\{M^{q-1},L^{q-1}\}L^{2-p}}{\theta}\Big)\Bigg)^\frac{s}{n+2s}\\
&\qquad\cdot 2^{j\big(\frac{n(n+ps+p)}{p(n+2s)}+\frac{s(n+ps+p+q)}{n+2s}+1\big)}\,|Y_j|^{1+\frac{s}{n+2s}},
\end{split}
\end{equation}
where we denote $Y_j=\frac{|A_{j}|}{|\mathcal{Q}_{j}|}$.
By choosing
\begin{align*}
c_0&=\frac{C(n,p,q,s,\Lambda)}{(1-a)L}\Bigg(L^p\Big(1+\frac{\max\{M^{q-1},L^{q-1}\}L^{2-p}}{\theta}\Big)\Bigg)^\frac{n}{p(n+2s)}\\
&\qquad\cdot\Bigg(\frac{\theta L^{p}}{\gamma\min\{M^{q-1},L^{q-1}\}}\Big(1+\frac{\max\{M^{q-1},L^{q-1}\}L^{2-p}}{\theta}\Big)\Bigg)^\frac{s}{n+2s},\\
b&=2^{\frac{n(n+ps+p)}{p(n+2s)}+\frac{s(n+ps+p+q)}{n+2s}+1},\,\beta=\frac{s}{n+2s},\,\tau=c_0^{-\frac{1}{\beta}}b^{-\frac{1}{\beta^2}}\in(0,1),
\end{align*}
Lemma \ref{Deg2} gives $Y_j\to 0$ as $j\to\infty$ if $Y_0\leq\tau$.
This implies that 
\[
u\geq\mu_{-}+aL
\text{ a.e. in }
\mathcal{Q}_{\frac{3r}{4}}^{-}(\theta).
\]

In the case when $ps \geq n$, we first  choose $N$ large enough such that $N > ps$ and  apply Lemma  \ref{ellipticembedding2}  with $\kappa^*= \frac{N}{N-ps}$ and Lemma \ref{parabolic embedding} with $l= p (1 + \frac{2s}{N})$ in the proof above.  Then by repeating the rest of the  arguments  above, we obtain a similar nonlinear iterative  inequality  as in \eqref{dist4} with  $\beta=\frac{s}{N+2s}$. The conclusion of the lemma thus follows in this case as well. 

\end{proof}

\medskip

\begin{proof}[Proof of Theorem \ref{thm1}]
Let $\theta=1$. Now it is quite straightforward to see from the  proof of  Lemma \ref{Lemma1} above that $\frac{3r}{4}$ can be replaced by $ \gamma r$ for any $\gamma <1 $ in which case $\tau$ changes to a possibly smaller constant depending on $\gamma$. 
This implies that there exists  $\tau \in (0,1)$ depending on $a, L, \mu^{-}$ and the data such that 
\[
|[u \leq \mu^{-} +L] \cap \mathcal{Q}_r(x_0, t_0)| \leq \tau |\mathcal{Q}_r(x_0, t_0)|,
\]
implies $u \geq \mu^{-} + aL$ almost everywhere in $\mathcal{Q}_{cr}(x_0, t_0)$,
for some $c= c(n, p, s)\in(0,1)$.
In particular $u$ satisfies the property $(\mathcal D)$ with respect to centered cylinders $\mathcal{Q}_r(x_0, t_0)$ and  thus the desired conclusion follows  by an application of Theorem \ref{lscthm}.
\end{proof}

We state our second De Giorgi type lemma using which one can repeat the arguments in \cite{Liao} to prove Theorem \ref{thm2}. 

\begin{Lemma}\label{Lemma2}
Let $1<p<\infty,\,q,\,L>0$, $a\in(0,1)$ and $u$ be a weak supersolution of \eqref{maineqn} in $\Omega\times(0,T).$ Assume that $\mu_{-}\leq\essinf_{\mathbb{R}^n\times(0,T)}\,u$. Then there exists a constant $\theta>0$ depending only on $a,L,\mu_{-}$ and the data such that, if $t_0$ is a Lebesgue instant and
$$
u(\cdot,t_0)\geq\mu_{-}+L \text{ a.e. in }B_r(x_0),
$$
then 
$$
u\geq\mu_{-}+aL\text{ a.e. in }\mathcal{Q}^{+}\bigl(x_0,t_0;\frac{3r}{4},\theta\bigr).
$$
\end{Lemma}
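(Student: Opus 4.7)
The strategy is to run a De Giorgi iteration on forward cylinders, paralleling the proof of Lemma \ref{Lemma1}, but with one crucial switch: the initial condition $u(\cdot,t_0)\ge\mu_{-}+L$ forces the boundary term at $t_0$ in the Caccioppoli estimate to vanish, and a purely spatial cutoff kills the $\partial_t\psi^p$ term. Consequently no $1/\theta$ factor enters the iteration constant, and $\theta$ itself can be chosen small at the end to make the iteration converge without any hypothesis on the initial measure density.

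Take $(x_0,t_0)=(0,0)$ and define iteration parameters as in \eqref{parameter1}--\eqref{parameter2}, but with forward cylinders
\[
\mathcal{Q}_j=B_{r_j}\times(0,\theta r_j^{ps}),\qquad \overline{\mathcal{Q}}_j=\overline{B}_j\times(0,\theta\bar{r}_j^{ps}).
\]
Set $w_j=(k_j-u)_{+}$, $\bar{w}_j=(\bar{k}_j-u)_{+}$, $A_j=\{u<k_j\}\cap\mathcal{Q}_j$, and choose $\psi_j\in C^\infty_c(B_{r_j})$ depending only on $x$, with $0\le\psi_j\le 1$, $\psi_j\equiv 1$ on $\overline{B}_j$ and $|\nabla\psi_j|\le C2^{j}/r$. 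Apply Lemma \ref{Subenergyestimate} to $w_j$ on $B_{r_j}\times(0,\theta r_j^{ps})$: since $\psi_j$ is time-independent the $\partial_t\psi_j^p$ term vanishes, and since $u(\cdot,0)\ge\mu_{-}+L\ge k_j$ a.e.\ in $B_r$, the boundary term $\int_{B_{r_j}\times\{0\}}\psi_j^p\xi(w_j)\,dx$ also vanishes (interpreted in the sense of Remark \ref{Lebpt} at the Lebesgue instant $t_0$). The remaining terms $I_1$ and $I_2$ are treated exactly as in Lemma \ref{Lemma1} and are both bounded, using $w_j\le L$, by $C\,2^{j(n+ps+p)}L^p r^{-ps}|A_j|$.

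Combine this bound with Lemma \ref{ellipticembedding2} and Lemma \ref{parabolic embedding} applied to $\bar{w}_j$ on $\overline{\mathcal{Q}}_j$, converting $\xi(\bar{w}_j)$ to $\bar{w}_j^2$ via Lemma \ref{Auxfnlemma} and the pointwise bound \eqref{new}--\eqref{gamma}. Chaining
\[
\frac{(1-a)L}{2^{j+2}}|A_{j+1}|\le\int_{\mathcal{Q}_{j+1}}\bar{w}_j\le\|\bar{w}_j\|_{L^{l}(\overline{\mathcal{Q}}_j)}|A_j|^{1-\frac{1}{l}},\qquad l=p\Big(1+\frac{2s}{n}\Big),
\]
and normalizing by $|\mathcal{Q}_j|\sim\theta r_j^{n+ps}$, one obtains for $Y_j=|A_j|/|\mathcal{Q}_j|$ an iterative inequality of the form
\[
Y_{j+1}\le C\,\theta^{\beta}\,b^{j}\,Y_j^{1+\beta},\qquad \beta=\frac{s}{n+2s},
\]
with $C$ and $b>1$ depending only on $a,L,\mu_{-}$ and the data (replace $n$ by some $N>ps$ when $ps\ge n$, as in Lemma \ref{Lemma1}). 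Since $Y_0\le 1$ trivially, choose $\theta$ so small that $C\theta^{\beta}b^{1/\beta^2}\le 1$; Lemma \ref{Deg2} then forces $Y_j\to 0$, yielding $u\ge\mu_{-}+aL$ a.e.\ on $\mathcal{Q}^{+}(0,0;3r/4,\theta)$.

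The main obstacle lies in tracking the $\theta$-dependence through the Sobolev interpolation step and verifying that the exponent of $\theta$ in the iteration constant is positive. In Lemma \ref{Lemma1} this exponent was effectively negative, because the $\partial_t\psi^p$ term contributed a $1/\theta$ factor that dominated the $\theta^{\beta}$ coming from the volume normalization, forcing an a priori measure smallness hypothesis. Here the vanishing of both the initial boundary term and the $\partial_t\psi^p$ term is precisely what flips the sign of this exponent, so that smallness of $\theta$ alone suffices to launch the iteration.
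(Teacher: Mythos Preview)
Your proposal is correct and follows essentially the same approach as the paper's proof: forward cylinders, a time-independent cutoff to kill the $\partial_t\psi^p$ term, the initial hypothesis to kill the boundary term at $t_0$, and then the observation that the resulting iteration constant carries a positive power $\theta^\beta$ so that choosing $\theta$ small makes $Y_0\le 1$ sufficient for Lemma~\ref{Deg2}. Your closing paragraph explaining why the sign of the $\theta$-exponent flips relative to Lemma~\ref{Lemma1} captures exactly the mechanism the paper exploits.
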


\begin{proof}
We may  again assume that $ps<n$,  because  for the case $ps \geq n$ we can  then modify the arguments exactly the same way as in Lemma \ref{Lemma1}. 
Let $\theta>0$ be such that $\mathcal{Q}^{+}(x_0,t_0;r,\theta)=B_r(x_0)\times[t_0,t_0+\theta r^{ps})\Subset\Omega\times(0,T)$. 
The parameter $\theta$ will be chosen below. Without loss of generality, we may assume that  $(x_0,t_0)=(0,0)$. 
Let $k_j,\,\bar{k}_j$, $r_j$, $\bar{r}_j$, $B_j\,\overline{B}_j$, $w_j$, $\bar{w}_j$, $j=0,1,2,\dots$, be as in \eqref{parameter1} and \eqref{parameter2}. 
In contrast with \eqref{parameter3}, here we consider the forward in time cylinders 
\[
\mathcal{Q}_j=B_j\times[0,\theta r_j^{ps})
\quad\text{and}\quad
\overline{\mathcal{Q}}_j=\overline{B}_j\times[0,\theta \bar{r}_j^{ps}).
\] 
Let $\psi_j(x,t)=\psi_j(x)$ be a smooth time independent function vanishing on $\partial B_r$ such that 
$0\leq\psi_j\leq 1$ in $\mathcal{Q}_j$, $\psi_j=1$ in $\overline{\mathcal{Q}}_j$,
$$
|\nabla \psi_j|\leq C\frac{2^j}{r}
\quad\text{and}\quad
\mathrm{dist}(\mathrm{supp}\,\psi_j,\,\mathbb{R}^n\setminus B_j)\geq 2^{-j-1}r,
$$
for some constant $C=C(n,p,s)$.
By Lemma \ref{Subenergyestimate}, we obtain a constant $C=C(p,\Lambda)$ such that
\begin{equation}\label{engapp1}
\begin{split}
&\int_{0}^{\theta r_j^{ps}}\int_{B_{j}}\int_{B_{j}}|w_j(x,t)\psi_{j}(x)-w_j(y,t)\psi_{j}(y)|^p \,d\mu\,dt
+\esssup_{0<t<\theta{r_j}^{ps}}\int_{B_{j}}\psi_{j}(x)^p\xi(w_j(x,t))\,dx\\&\leq C\Bigg(\int_{0}^{\theta r_j^{ps}}\int_{B_{j}}\int_{B_{j}}{\max\{w_j(x,t),w_j(y,t)\}^p|\psi_{j}(x)-\psi_{j}(y)|^p}\,d\mu\,dt\\
&\qquad+\esssup_{x\in\mathrm{supp}\,\psi_j,\,0<t<\theta r_j^{ps}}\int_{{\mathbb{R}^n\setminus B_{j}}}{\frac{w_j(y,t)^{p-1}}{|x-y|^{n+ps}}}\,dy
\int_{0}^{\theta r_j^{ps}}\int_{B_{j}}w_j(x,t)\psi_j(x)^p\,dx\,dt\\
&\qquad+\int_{0}^{\theta r_j^{ps}}\int_{B_{j}}\xi(w_j(x,t))\partial_t\psi_j(x)^p\,dx\,dt+\int_{B_j\times\{0\}}\psi(x)^p\xi(w_j(x,t))\,dx\Bigg).
\end{split}
\end{equation}
Since $\psi$ is independent of $t$, the third term in the right hand side of \eqref{engapp1} vanishes. 
Since $u(\cdot,0)\geq\mu_{-}+L$ almost everywhere in $B_r(0)$, we have that $u(\cdot,0)\geq \mu_{-}+L>k_j$ almost everywhere in $B_j$. 
Therefore the last term on the right-hand side of \eqref{engapp1} vanishes. 
Proceeding similarly as in the proof of Lemma \ref{Lemma1},  we obtain from \eqref{engapp1} that 
\begin{equation}\label{eng1pp2}
\begin{split}
&\int_{0}^{\theta{\bar{r}_j}^{ps}}\int_{\overline{B}_{j}}\int_{\overline{B}_{j}}\frac{|\bar{w}_j(x,t)-\bar{w}_j(y,t)|^p}{|x-y|^{n+ps}} \,dx\,dy\,dt+\frac{\gamma\min\{M^{q-1},L^{q-1}\}}{2^{jq}}\esssup_{0<t<\theta{\bar{r}_j}^{ps}}\int_{\overline{B}_j}\bar{w}_j^2\,dx\\
&\qquad\leq C\frac{2^{j(n+ps+p)}}{r^{ps}}L^p|A_j|,
\end{split}
\end{equation}
where $C=C(n,p,q,s,\Lambda)$, $A_j=\{u<k_j\}\cap \mathcal{Q}_j$, $\gamma$ as defined in \eqref{gamma} and $M=\max\{|\mu_{-}|,|\mu_{-}+L|\}$. 
Observing that $\mathcal{Q}_{j+1}\subset\overline{\mathcal{Q}}_j\subset \mathcal{Q}_j$ and, by arguing similarly as in  Lemma \ref{Lemma1}, we conclude that
\begin{equation}\label{iterationest}
\begin{split}
Y_{j+1}&\leq\,\frac{C}{1-a}\Big(\frac{\theta}{L^{2-p}\gamma\min\{M^{q-1},L^{q-1}\}}\Big)^\frac{s}{n+2s}\,2^{j\bigl(\frac{n(n+ps+p)}{p(n+2s)}+\frac{s(n+ps+p+q)}{n+2s}+1\bigr)}\,Y_j^{1+\frac{s}{n+2s}},
\end{split}
\end{equation}
where $C=C(n,p,q,s,\Lambda)$ and $Y_j=\frac{|A_{j}|}{|\mathcal{Q}_{j}|}$. Let
\[
d_0=\frac{C}{1-a}\Big(\frac{1}{L^{2-p}\gamma\min\{M^{q-1},L^{q-1}\}}\Big)^\frac{s}{n+2s},
\]
where $C=C(n,p,q,s,\Lambda)$.
By choosing
\[
b=2^{\bigl(\frac{n(n+ps+p)}{p(n+2s)}+\frac{s(n+ps+p+q)}{n+2s}+1\bigr)},
\quad
\beta=\frac{s}{n+2s},
\quad
c_0=d_0\,\theta^\beta
\]
in Lemma \ref{Deg2}, we have $Y_j\to0$ as $j\to\infty$, if
$Y_0\leq\nu=c_0^{-\frac{1}{\beta}}b^{-\frac{1}{\beta^2}}$. 
By choosing $\theta=\delta\,d_0^{-\frac{1}{\beta}}\,b^{-\frac{1}{\beta^2}}$ for $\delta\in(0,1)$, we get $\nu=\delta^{-1}>1$. 
Hence the fact  that $Y_0\leq 1$ and Lemma \ref{Deg2} imply that
$Y_j\to0$ as $j\to\infty$. 
Therefore, we have 
\[
u\geq\mu_{-}+aL
\text{ a.e. in }
\mathcal{Q}_{\frac{3r}{4}}^{+}(\theta).
\]
\end{proof}

\section*{Acknowledgements}
{A.B. is supported in part by SERB Matrix grant MTR/2018/000267 and by Department of Atomic Energy,  Government of India, under
project no. 12-R \& D-TFR-5.01-0520.
P.G. and J.K. are supported by the Academy of Finland.}

\medskip
\noindent
Agnid Banerjee\\
Tata Institute of Fundamental Research\\
Centre For Applicable Mathematics\\
Bangalore-560065, India\\
Email: agnidban@gmail.com\\

\noindent
Prashanta Garain\\
Department of Mathematics\\
Ben Gurion University of the Negev\\
Be’er Sheva-84105, Israel\\
Email: pgarain92@gmail.com\\

\noindent
Juha Kinnunen\\
Department of Mathematics\\
P.O. Box-11100\\
FI-00076 Aalto University, Finland\\
Email: juha.k.kinnunen@aalto.fi

\end{document}